\theoremstyle{theorem}
\newtheorem{theorem}{Theorem}[section]
\newtheorem{corollary}[theorem]{Corollary}
\newtheorem{proposition}[theorem]{Proposition}
\newtheorem{lemma}[theorem]{Lemma}
\newtheorem*{mtheorem}{Main Theorem}
\newtheorem*{corollary*}{Corollary}
\newtheorem*{proposition*}{Proposition}
\theoremstyle{definition}
\newtheorem{definition}[theorem]{Definition}
\theoremstyle{remark}
\newtheorem{remark}[theorem]{Remark}
\newcommand{\R}{{\mathbb R}}
\renewcommand{\a}{\alpha}
\renewcommand{\b}{\beta}
\newcommand{\f}{\varphi}
\newcommand{\g}{\gamma}
\newcommand{\cH}{\mathcal{H}}
\newcommand{\SO}{{\mathrm {SO}}}
\newcommand{\G}{{\mathrm G}}
\newcommand{\K}{{\mathrm K}}
\newcommand{\beq}{\begin{equation}}
\newcommand{\eeq}{\end{equation}}
\newcommand{\n}{\nabla}
\newcommand{\W}{\wedge}
\DeclareMathOperator\Ad{Ad}
\DeclareMathOperator\vol{vol}
\newcommand{\Ric}{{\rm Ric}}
\renewcommand{\gg}{\mathfrak{g}}
\newcommand{\gk}{\mathfrak{k}}
\newcommand{\gm}{\mathfrak{m}}
\numberwithin{equation}{section}
\title[3D positively curved generalized Ricci solitons with SO(3) symmetries]{Three-dimensional positively curved generalized Ricci solitons with
SO(3)-symmetries}
\author{Fabio Podest\`a and Alberto Raffero}
\subjclass[2020]{
53E20, 
53C25
}
\keywords{generalized Ricci soliton, gradient soliton, cohomogeneity one action}
\address{Dipartimento di Matematica e Informatica ``U.~Dini'' \\ Universit\`a degli Studi di Firenze\\ Viale Morgagni 67/a\\ 50134 Firenze\\ Italy\\ }
\email{fabio.podesta@unifi.it}
\address{Dipartimento di Matematica ``G.~Peano'' \\ Universit\`a degli Studi di Torino\\
Via Carlo Alberto 10\\
10123 Torino\\ Italy}
\email{alberto.raffero@unito.it}
\begin{document}
\begin{abstract}
We prove the existence of a one-parameter family of pairwise non-isometric, complete, positively curved,
steady generalized Ricci solitons of gradient type on $\R^3$ that are invariant under the natural cohomogeneity one action of SO(3).
In the context of generalized Ricci flow, this result represents the analogue of Bryant's construction
of the complete rotationally invariant steady soliton for the Ricci flow.
\end{abstract}

\maketitle

\section{Introduction}\label{intro}
Let $(M,g)$ be an oriented Riemannian manifold and let $H\in\Omega^3(M)$ be a closed $3$-form on it.
The pair $(g,H)$ is said to be a {\em generalized Ricci soliton} (shortly {\em GRS}) if
there exist a vector field $X\in\Gamma(TM)$ and a real number $\lambda$ such that
\beq\label{GRSIntro}
\begin{split}
\Ric_g 	&=  \lambda\,g -\frac12\mathcal{L}_Xg+\frac14 \cH_{g,H},\\
\Delta_gH	&= 2\lambda\,H-\mathcal{L}_XH.
\end{split}
\eeq
Here, $\Delta_{g} = d\delta_g+\delta_gd$ is the Hodge Laplacian determined by $g$ and the orientation, and
$\cH_{g,H}$ is the symmetric $2$-tensor defined as follows
\[
\cH_{g,H}(X,Y) \coloneqq g(\imath_XH,\imath_YH) = 2 \star_g\left(\imath_XH\wedge \star_g (\imath_YH)\right),
\]
for all $X,Y\in\Gamma(TM)$. According to the sign of the expansion constant $\lambda$, a GRS is said to be
{\em expanding}, {\em steady} or {\em shrinking} if $\lambda<0$, $\lambda=0$ or $\lambda>0$, respectively.
Moreover, it is said to be of {\em gradient type} if $X=\nabla f$, for some $f\in C^\infty(M)$.

\smallskip

Generalized Ricci solitons arise in the study of the {\em generalized Ricci flow} (shortly {\em GRF}), a geometric flow evolving
a family of Riemannian metrics $g_t$ and closed $3$-forms $H_t$ as follows
\begin{equation}\label{GRF}
\left\{
\begin{split}
\tfrac{\partial}{\partial t} g_t &= -2\Ric_{g_t} +\frac12 \cH_{g_t,H_t},\\
\tfrac{\partial}{\partial t} H_t &= -\Delta_{g_t}H_t.
\end{split}
\right.
\end{equation}
In detail, the solution to the generalized Ricci flow starting at a GRS $(g,H)$ is {\em self-similar}, namely it evolves along symmetries of the flow.
In this case, symmetries are diffeomorphisms and simultaneous scalings $(g,H)\to (\a g,\a H)$, for $\a\in\R^+$.
On the other hand, given a self-similar solution $(g_t,H_t)= (\sigma_t\f_t^*g,\sigma_t\f_t^*H)$ of \eqref{GRF}, where
$\sigma_t$ is a positive scalar such that $\sigma_0=1$ and $\f_t\in\mathrm{Dif{}f}(M)$ is a family of diffeomorphisms satisfying $\f_0=\mathrm{Id}_M$,
then the initial datum $(g_0,H_0)$ solves the GRS equation \eqref{GRSIntro} (see Section \ref{GRSSect} for more details).

\smallskip

As in the case of the Ricci flow,  self-similar solutions to the generalized Ricci flow are expected as long time limits and singularity models for the flow.
Indeed, on compact manifolds, the generalized Ricci flow admits a Perelman-type energy monotonicity formula,
and it is the gradient flow of the lowest eigenvalue of a certain Schr\"odinger operator \cite{OSW}.
In this context, critical points of the relevant energy functional are given by steady generalized Ricci solitons of gradient type,
see, e.g., \cite{OSW} or \cite[Sect.~6.1.3]{GFS}.
In particular, every steady GRS on a compact manifold is of gradient type \cite[Cor.~6.11]{GFS}.
Furthermore, it follows from \cite[Prop.~6.4]{Str} (see also \cite[Cor.~6.11]{StTi2}) that an expanding GRS on a compact
manifold is necessarily of the form $(g,H\equiv0)$, with $g$ an Einstein metric.
The dynamical behavior of pairs $(g,H\equiv0)$, with $g$ a Ricci-flat metric, and, more generally,
of gradient steady GRS under the generalized Ricci flow was studied in \cite{RV} and in \cite{L}, respectively

\smallskip

The generalized Ricci flow first appeared in theoretical physics as the renormalization group flow
of a nonlinear sigma model arising in string theory \cite{CFMP}.
From the mathematical viewpoint, it can be considered as a generalization of Hamilton's Ricci flow to metric connections with totally
skew-symmetric torsion \cite{Str},  and as a flow of generalized metrics on exact Courant algebroids \cite{Gar,GFS, Str1}.
In Hermitian non-K\"ahler Geometry, Streets and Tian's {\em pluriclosed flow} \cite{StTi}
can be regarded as a gauge modified version of the generalized Ricci flow, see \cite[Thm.~6.5]{StTi2}.
Examples of solitons of the pluriclosed flow on compact complex $4$-manifolds and related classification results
were recently obtained by Streets and Ustinovskiy in \cite{Str2,StUs}. In \cite{StUs2}, they also provided a partial classification of complete
$4$-dimensional gradient solitons for the generalized K\"ahler-Ricci flow introduced in \cite{StTi1}.
These satisfy equation \eqref{GRSIntro} with $\lambda=0$ and $X$ a gradient vector field.
In the homogeneous setting, examples of left-invariant GRS on the three-dimensional Heisenberg Lie group were given in \cite{Par}.
Furthermore, a study of the homogeneous generalized Ricci flow and its homogeneous solitons is done in \cite{FLS}.

\smallskip

Some known meaningful geometric equations can be seen as special instances of the GRS equation \eqref{GRSIntro}.
For instance, when $\lambda=0$ and the vector field $X$ is either zero or, more generally, $\mathcal{L}_Xg=0$ and $\mathcal{L}_XH=0$,
then \eqref{GRSIntro} reduces to
\[
\Ric_g = \frac14 \cH_{g,H},\qquad
\Delta_gH	= 0.
\]
A pair $(g,H)$ solving the above equations is a fixed point of the GRF, and thus a {\em Bismut Ricci-flat pair} (shortly {\em BRF pair})
in the sense of \cite{PR1}.
In Proposition \ref{GRStoBRF}, we show that a compact steady GRS $(g,H)$ with constant scalar curvature and constant $g$-norm of $H$ must
be a BRF pair. This applies, for instance, to compact homogeneous steady GRS.
The existence of invariant BRF pairs on compact homogeneous spaces was first proved in \cite{PR1}, and further infinite families of examples
were subsequently provided in \cite{PR2} and then in \cite{LW}.

On the other hand, if $H\equiv0$, then \eqref{GRSIntro} reduces to the classical {\em Ricci soliton} equation
\[
\Ric_g =  \lambda\,g -\frac12\mathcal{L}_Xg.
\]

In the unpublished work \cite{Bryant} (see also \cite[Ch.~1, Sect.~4]{CC}),
Bryant proves the existence of an $\SO(3)$-invariant complete steady Ricci soliton of gradient type on $\R^3$,
and he shows that it is positively curved and unique up to homothety.
This motivates the search of complete steady generalized Ricci solitons with rotational symmetry and non-zero $3$-form $H$ on $\R^3$,
see \cite[Question 4.32]{GFS}.
In this article, we show the existence of such solitons, proving the following.
\begin{mtheorem}
There exists a one-parameter family of pairwise non-isometric,
complete, positively curved, $\SO(3)$-invariant steady generalized Ricci solitons of gradient type
$(g_\ell,H_\ell)$ on $\mathbb R^3$, where $\ell\in\mathbb R$.
\end{mtheorem}

We now describe the strategy of the proof, which starts in Section \ref{SecEq}.
We first focus on the open subset $\R^3\smallsetminus\{0\}\simeq \R^+\times S^2$, where the generic $\SO(3)$-invariant metric is a warped
product of the form
\[
g = dt^2+\phi(t)^2 d\sigma^2.
\]
Here, $t$ is the standard coordinate on $\R^+$ and describes the normal geodesic $\gamma_t$,
while $d\sigma^2$ is the $\SO(3)$-invariant metric of constant sectional curvature $1$ on
$S^2\cong\SO(3)/\SO(2)$. We then consider the generic $\SO(3)$-invariant closed $3$-form $H$,
which is determined by a single function $h(t)$,
and the $\SO(3)$-invariant gradient vector field $X=\nabla f(t) = f'(t)\frac{\partial}{\partial t}$.
It is known that the metric $g$ extends to a smooth metric on the whole $\R^3$
if and only if the function $\phi$ extends as an odd smooth function about the origin with $\phi'(0)=1$.
Moreover, the gradient vector field $X$ extends to the whole $\R^3$ if and only if $f(t)$ extends as an even smooth function about the origin,
and it is not restrictive assuming the normalization condition $f(0)=0$.

Plugging the expressions of $g$, $H$ and $X$ into the system \eqref{GRSIntro} with $\lambda=0$,
we obtain that $h(t) = k\,\phi^2\,e^f$, for some constant $k$, and that the pair $(\phi,f)$ must solve the
following singular system of second order differential equations
\beq\label{SysIntro}
\left\{\begin{split}
\phi'' 	&=  \frac {1-\phi'^2}{\phi}+\phi'f'- \frac{k^2}{2}\phi\,e^{2f},\\
f'' 	&=2\frac {1-\phi'^2}{\phi^2}+2\frac{\phi'}{\phi}f' -  \frac{k^2}{2} e^{2f}.
\end{split}\right.
\eeq
The system for $\SO(3)$-invariant steady Ricci solitons is obtained putting $k=0$.
Since we are interested in solutions with non-zero $H$,
we focus on the case $k\neq0$, where it is not restrictive assuming that $k^2=2$, up to homothety (see Remark \ref{Remc1} for details).

\smallskip

In Section \ref{SecCL}, we obtain a conservation law for the system \eqref{SysIntro} from the following conservation law for
steady gradient generalized Ricci solitons 
\[
 d\left(\Delta f +\frac{1}{6}|H|_g^2 - |\nabla f|_g^2\right) = 0.
\]

This will be a fundamental tool in proving the existence of complete solitons.
\begin{proposition}\label{IntroCL}
The system \eqref{SysIntro} has the following conservation law
\[
0 = d\left(\Delta f +\frac{1}{6}|H|_g^2 - |\nabla f|_g^2\right) = \left(f'' +2\frac{\phi'}{\phi}f' +k^2e^{2f} - (f')^2\right)'.
\]
\end{proposition}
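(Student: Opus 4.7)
The plan is to prove the proposition in two stages. First I will identify the intrinsic quantity with the explicit ODE expression using the $\SO(3)$-invariant ansatz, and then verify by direct differentiation that this expression is a first integral of \eqref{SysIntro}. Since the resulting quantity is a function of $t$ alone, the statement $d(\cdot)=0$ is equivalent to the vanishing of its $t$-derivative.

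For the first stage, three ingredients suffice. Since $f = f(t)$ and $\partial_t$ is a unit vector, $|\nabla f|^2_g = (f')^2$. For the warped product $g = dt^2 + \phi(t)^2 d\sigma^2$, the Laplacian of a radial function is $\Delta f = f'' + 2\tfrac{\phi'}{\phi}f'$. Finally, writing the generic $\SO(3)$-invariant closed $3$-form as $H = h(t)\, dt \wedge \omega_{S^2}$ with $\omega_{S^2}$ the standard area form on $S^2$, a short Hodge-star computation (with the sign convention fixed by $\cH_{g,H}(X,Y) = 2\star_g(\imath_X H \wedge \star_g \imath_Y H)$) combined with $h = k\phi^2 e^f$ (established earlier in Section \ref{SecEq}) yields $\tfrac{1}{6}|H|^2_g = k^2 e^{2f}$. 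Substituting, one finds
\[
\Delta f + \tfrac{1}{6}|H|^2_g - |\nabla f|^2_g = f'' + 2\tfrac{\phi'}{\phi}f' + k^2 e^{2f} - (f')^2.
\]

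For the second stage, I will differentiate $F(t) := f'' + 2\tfrac{\phi'}{\phi}f' + k^2 e^{2f} - (f')^2$ in $t$, producing an expression involving $\phi''$ and $f'''$ alongside lower-order derivatives. I then substitute $\phi''$ from the first ODE in \eqref{SysIntro}, replace $f''$ by the right-hand side of the second ODE wherever it appears, and obtain $f'''$ by differentiating the second ODE and re-using the same substitutions. After collecting terms, the cancellations occur in parallel groups organized by the monomials $\phi'(1-\phi'^2)/\phi^3$, $\phi'^2 f'/\phi^2$, $\phi' e^{2f}/\phi$, and $f' e^{2f}$; each group sums to zero, so $F' \equiv 0$ on every solution of \eqref{SysIntro}.

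The main obstacle is purely the algebraic bookkeeping in the second stage: the computation produces many structurally similar terms, and one must track signs and coefficients carefully. No new geometric or analytic input is required beyond the ODE system itself. An alternative, more conceptual route would be to derive the conservation law abstractly for any gradient steady GRS, combining the contracted second Bianchi identity with the trace of the first equation in \eqref{GRSIntro} and the divergence of $\cH_{g,H}$; however, for this one-dimensional system the direct ODE verification is the shortest path.
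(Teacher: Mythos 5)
Your proof is correct, and the verification does go through: with $\Delta f = f'' + 2\tfrac{\phi'}{\phi}f'$, $|\nabla f|_g^2=(f')^2$ and $\tfrac16|H|_g^2=\tfrac{h^2}{\phi^4}=k^2e^{2f}$ (using $h=k\phi^2e^f$) the first equality holds, and for the second stage, after eliminating $f''$ via the second ODE the quantity becomes $2\tfrac{1-\phi'^2}{\phi^2}+4\tfrac{\phi'}{\phi}f'+\tfrac{k^2}{2}e^{2f}-(f')^2$, whose derivative, upon substituting both equations of \eqref{SysIntro}, splits into six monomial groups ($\phi'(1-\phi'^2)/\phi^3$, $f'(1-\phi'^2)/\phi^2$, $\phi'^2f'/\phi^2$, $\phi'f'^2/\phi$, $\phi'e^{2f}/\phi$, $f'e^{2f}$ --- note you omitted two of these from your list) each of which cancels. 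However, your main route is genuinely different from the paper's. The paper does not verify the first integral by direct differentiation: it quotes the general conservation law $d\bigl(R_g-\tfrac{1}{12}|H|_g^2+2\Delta f-|\nabla f|_g^2\bigr)=0$ for steady gradient generalized Ricci solitons from \cite[Prop.~4.33, (3)]{GFS}, eliminates $R_g$ using the trace of the soliton equation, $R_g=-\Delta f+\tfrac14|H|_g^2$, to arrive at $d\bigl(\Delta f+\tfrac16|H|_g^2-|\nabla f|_g^2\bigr)=0$, and only then performs the ansatz computations of your first stage. This is exactly the ``more conceptual route'' you mention and set aside. The paper's approach buys brevity and an explanation of why the conserved quantity exists (it is the specialization of a Bianchi-type identity valid for any steady gradient GRS); your approach buys self-containedness, avoiding reliance on the cited identity, and doubles as a consistency check on the reduction of \eqref{GRSIntro} to the ODE system, at the cost of the algebraic bookkeeping you correctly anticipate.
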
 \noindent
In the same section, we also show that this conservation law reduces to the one obtained by Bryant \cite[eq.~(2.5)]{Bryant} when $k=0$.

In Section \ref{SecSTE}, we prove the existence of a solution to \eqref{SysIntro} defined for short times and satisfying the required
extendability conditions.
\begin{proposition}\label{IntroSTE}
For every $q\in \mathbb R$, there exist $\epsilon>0$ and a smooth map $(\phi,f):(-\epsilon,\epsilon)\to \R^2$
which gives a solution to the system \eqref{SysIntro} with $k^2=2$
on $(-\epsilon,\epsilon)\smallsetminus\{0\}$ and satisfies the following conditions
\begin{enumerate}[i)]
\item $\phi$ is odd with $\phi'(0)=1$;
\item $f$ is even with $f(0)=0$;
\item $\phi^{(3)}(0) = \tfrac12\left( f''(0)-1\right) =q$.
\end{enumerate}
\end{proposition}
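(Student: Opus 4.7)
The plan is to construct $(\phi,f)$ as a convergent power series at $t=0$ whose term structure is dictated by the required parity, and then to verify convergence on a short interval. Concretely, I seek a formal solution of the form
\[
\phi(t) = t + \sum_{n\geq 1} a_n\,t^{2n+1},\qquad f(t) = \sum_{n\geq 1} b_n\,t^{2n},
\]
so that $\phi$ is odd with $\phi'(0)=1$, $f$ is even with $f(0)=0$, and condition~(iii) becomes $a_1=q/6$, $b_1=(2q+1)/2$. Substituting into \eqref{SysIntro} with $k^2=2$ and expanding in powers of $t$, the leading-order coefficient identities (namely $t^1$ in the first equation and $t^0$ in the second) both collapse to the single indicial relation $2b_1=12a_1+1$, which is automatically satisfied by our choice of $a_1,b_1$. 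For each $n\geq 2$, matching the coefficients of $t^{2n-1}$ in the first equation and of $t^{2n-2}$ in the second yields a $2\times 2$ linear system in $(a_n,b_n)$; a direct computation shows that its determinant equals $4n(n-1)(2n+1)^2\neq 0$, so $(a_n,b_n)$ is uniquely determined by the previously computed coefficients (ultimately as a polynomial in $q$). This produces a unique formal power series solution parametrized by $q$.

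To upgrade this formal solution to a genuine analytic — hence smooth — solution on some $(-\epsilon,\epsilon)$, I would establish geometric decay of the coefficients via the classical majorant method: comparing the recursion above with the analogous recursion of an analytic ODE whose right-hand side dominates ours term-by-term, one extracts estimates $|a_n|,|b_n|\leq C R^n$, which yield convergence on the interval $|t|<R^{-1/2}$. An equivalent, possibly cleaner alternative is to perform a desingularizing change of unknowns such as $\phi(t)=t+t^3 A(t)$, $f(t)=t^2 F(t)$, which turns the singular terms $(1-\phi'^2)/\phi$ and $(1-\phi'^2)/\phi^2$ into functions smooth in $(t,A,A',F,F')$ thanks to $1-\phi'^2=O(t^2)$; the system then recasts as a first-order Fuchsian system at $t=0$, to which a Briot--Bouquet-type existence theorem for analytic solutions at a regular singular point applies.

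The main technical obstacle is the convergence argument. While the leading-order consistency and the recursion itself are transparent, obtaining effective bounds on the coupled nonlinear recursion demands careful bookkeeping of how each algebraic operation — products of power series, reciprocals of series beginning with $t$, composition with the exponential — propagates constants through the induction. Once convergence is in hand, smoothness, the required parity, and the normalizations $\phi'(0)=1,\,f(0)=0$ follow immediately from the form of the ansatz, and the singular system \eqref{SysIntro} is satisfied pointwise on $(-\epsilon,\epsilon)\smallsetminus\{0\}$ by construction, completing the proof.
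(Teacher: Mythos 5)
Your setup parallels the paper's: after desingularizing (the paper writes $\phi(t)=t\,a(t)$ with $a$ even, which is your alternative ansatz in slightly different clothing), one seeks a formal even/odd power series solution, finds that the lowest-order coefficient equations are degenerate but compatible---both reducing to the single relation $f''(0)=2\phi^{(3)}(0)+1$, i.e.\ your $2b_1=12a_1+1$, which leaves $q$ free---and that all higher coefficients are determined by an invertible $2\times 2$ linear system (the paper computes $\det L_{2n}=\tfrac{n(2n+3)^2}{(n+1)(2n+1)^2}>0$ for $n\geq 1$, consistent with the vanishing structure of your claimed determinant). Up to this point your argument is sound and matches the paper's.

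The genuine gap is the convergence step, which you correctly identify as the main obstacle but do not carry out. This cannot be waved through: for singular ODEs a formal power series solution may well diverge (the Euler equation $t^2y'=y-t$ with formal solution $\sum_n n!\,t^{n+1}$ is the standard warning), so ``geometric decay of the coefficients'' is precisely the content that must be proved, and the majorant bookkeeping for this coupled nonlinear recursion---with the degenerate first step and the reciprocals $1/\phi$, $1/\phi^2$---is nontrivial. The paper avoids this entirely by invoking Malgrange's theorem \cite[Thm.~7.1]{M} (following Eschenburg--Wang \cite{EW}): once a formal power series solution of the singular first-order system
\[
P'=Q,\qquad Q'=\tfrac{1}{t^2}A(P)+\tfrac1t B(P,Q)+C(P,Q)
\]
exists, Malgrange produces a genuine \emph{smooth} (not necessarily analytic) solution having that series as its Taylor expansion at $t=0$; no convergence is needed. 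The price is a small extra step you would get for free: since the Malgrange solution is only smooth, the paper enforces parity by replacing $\tilde P(t)$ with $P(t)\coloneqq\tilde P(|t|)$, which is smooth because the formal expansion contains only even powers. If you want to keep your route, you must either complete the majorant estimate or correctly invoke a Briot--Bouquet/regular-singular-point theorem after verifying its hypotheses (in particular the non-resonance condition encoded by $\det L_{2n}\neq 0$ for $n\geq1$); as written, the proof is incomplete at its most delicate point.
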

This result is achieved using Malgrange Theorem \cite[Thm.~7.1]{M}.
The discussion highlights the freedom in the choice of the real constant $q=\phi^{(3)}(0)$. This ensures the existence of a one-parameter
family of local solutions to \eqref{SysIntro}, and thus the existence of a one-parameter
family of $\SO(3)$-invariant steady generalized Ricci solitons defined on small balls in $\R^3$.

\smallskip

In Section \ref{SecComplete}, we prove that the local solutions extend to the whole $[0,+\infty)$
so that the corresponding $\SO(3)$-invariant steady generalized Ricci solitons are complete.
The proof follows from a thorough qualitative analysis of the solutions to the system \eqref{SysIntro}.
The crucial result is an estimate for the function $\phi\,e^f$, which is proved in Proposition \ref{PropEst}.
We summarize our results in the next theorem, whose proof is obtained in Section \ref{SecComplete} through several steps.
\begin{theorem}\label{IntroCOMP}
Let $q_\ell \coloneqq -\frac{35}{12}-e^{-\ell}$, where $\ell\in\R$.
Then, for every $\ell\in\R$, the local solution $(\phi_\ell,f_\ell)$ to \eqref{SysIntro} with $k^2=2$ and
$\phi_\ell^{(3)}(0) = \tfrac12\left( f_\ell''(0)-1\right)  =q_\ell$
extends to the whole $[0,+\infty)$ giving rise to a complete steady generalized Ricci soliton $(g_\ell,H_\ell)$
with vector field $X_\ell=\nabla f_\ell$ on $\mathbb R^3$. Moreover,
\begin{enumerate}[i)]
\item\label{COMPi}
on $(0,+\infty)$ the function $\phi_\ell$ is positive and concave, its derivative satisfies $0<\phi_\ell'<1$, and
\[
\phi_\ell \sim \sqrt{\frac{2t}{|6q_\ell+5|}},\quad t\to+\infty;
\]
\item\label{COMPii}
on $(0,+\infty)$ the function $f_\ell$ is negative, strictly decreasing  and
\[
f_\ell \sim -\sqrt{|6q_\ell+5|} \,t,\quad t\to+\infty;
\]
\item\label{COMPiii}
$g_\ell$ has positive curvature and
\[
q_\ell = -\lim_{t\to 0}K_{\mathrm{tan}}(t),
\]
where $K_{\mathrm{tan}}(t)$ denotes the sectional curvature of the orbit $\SO(3)\cdot\gamma_t$;
\item\label{COMPiv}
$g_\ell$ and $g_{\ell'}$ are isometric if and only if $q_\ell = q_{\ell'}$;
\item\label{COMPv}
the $3$-form $H_\ell$ is given by
\[
H_\ell = \frac{\sqrt 2}{t^2}\phi_\ell^2 e^{f_\ell} \vol_o,
\]
where $\vol_o$ denotes the euclidean volume form, and it converges to zero  exponentially fast for $t\to+\infty$.
\end{enumerate}
\end{theorem}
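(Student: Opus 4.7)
The plan is to take the one-parameter local family produced by Proposition~\ref{IntroSTE}, use the conservation law of Proposition~\ref{IntroCL} to pin down a favorable sign regime, and then combine the estimate of Proposition~\ref{PropEst} with a qualitative bootstrap analysis of \eqref{SysIntro} to extend the solution to $[0,+\infty)$ and read off the asymptotics.

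I first evaluate the conserved quantity at $t=0$. Using $\phi(0)=0$, $\phi'(0)=1$, $\phi''(0)=0$, $f(0)=f'(0)=0$, and L'Hopital in the singular term $(\phi'/\phi)f'$, the constant of motion works out to $3f''(0)+k^{2}=6q+5$. The choice $q_\ell=-\tfrac{35}{12}-e^{-\ell}$ is engineered so that $6q_\ell+5<0$; setting $\mu_\ell:=\sqrt{-(6q_\ell+5)}>0$, the conservation law becomes
\[
f''+2\tfrac{\phi'}{\phi}f'+2e^{2f}-(f')^{2}\equiv -\mu_\ell^{2}.
\]
On the maximal right-interval of existence $[0,T)$ I plan to establish by a first-time argument the sign pattern $\phi>0$, $0<\phi'<1$, $\phi''<0$, $f<0$, $f'<0$. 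These are forced near $t=0$ by the Taylor expansions $\phi(t)=t+\tfrac{q_\ell}{6}t^{3}+O(t^{5})$ and $f(t)=\tfrac{2q_\ell+1}{2}t^{2}+O(t^{4})$; at any hypothetical first violation one plugs the corresponding value into \eqref{SysIntro} or into the conservation law and reaches a contradiction. For instance, at a first zero of $f'$ the conservation law gives $f''=-\mu_\ell^{2}-2e^{2f}<0$, which rules out a minimum of $f$ on the negative halfline; at a first point where $\phi'$ would re-enter $1$ from below, the first equation of \eqref{SysIntro} yields $\phi''=f'-\phi\,e^{2f}<0$, contradicting $\phi''\ge 0$. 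The arguments for $\phi'>0$ and $\phi''<0$ are of the same flavor and rely on the uniform bound on $\phi e^{f}$ coming from Proposition~\ref{PropEst}.

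That bound, together with the sign pattern, prevents any finite-time blow-up and keeps $\phi$ bounded away from $0$ on every compact subinterval of $(0,T)$, so standard ODE extension gives $T=+\infty$; smoothness at the origin and completeness of $g_\ell$ are then automatic from the extendability conditions already built into Proposition~\ref{IntroSTE}. The asymptotics (i)--(ii) drop out of the conservation law: since $f'$ is monotone and bounded below it admits a limit, and as $e^{2f}, \phi'\to 0$ we get $(f')^{2}\to \mu_\ell^{2}$, hence $f'\to -\mu_\ell$ and $f\sim -\mu_\ell t$; substituting back into the first equation of \eqref{SysIntro} and balancing the surviving leading terms $(1-\phi'^{2})/\phi\sim -\phi' f'$ yields $\phi\phi'\sim 1/\mu_\ell$, which integrates to $\phi\sim\sqrt{2t/\mu_\ell}$. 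Item (iii) follows from the warped-product identities $K_{\mathrm{rad}}=-\phi''/\phi$ and $K_{\mathrm{tan}}=(1-\phi'^{2})/\phi^{2}$ combined with the sign pattern; the identity $q_\ell=-\lim_{t\to 0}K_{\mathrm{tan}}$ is a one-line Taylor computation, and since this limit is a Riemannian invariant at the unique fixed point of the $\SO(3)$-action, (iv) is immediate. Finally, (v) is direct from $h=\sqrt{2}\,\phi^{2}e^{f}$ together with the derived decay $e^{f}\sim e^{-\mu_\ell t}$ and $\phi\sim\sqrt{2t/\mu_\ell}$.

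The hardest step I foresee is the qualitative one: the nonlinear feedback term $\phi\,e^{2f}$ in the first equation of \eqref{SysIntro} could in principle drag $\phi$ to $0$ in finite time, and the conservation law by itself does not exclude this. Closing the sign bootstrap therefore genuinely relies on the sharp estimate of Proposition~\ref{PropEst} for $\phi e^{f}$, and that is where I expect the main effort to go.
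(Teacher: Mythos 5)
Your strategy is the same as the paper's: evaluate the conserved quantity at the origin to get the constant $3f''(0)+k^2=6q+5$, run first-time sign arguments for $f<0$, $f'<0$, $\phi'<1$, feed the estimate of Proposition \ref{PropEst} into the remaining sign claims, extend by standard ODE theory via Proposition \ref{lte}-type reasoning, and extract the asymptotics from the conservation law. The steps you actually carry out are correct: the contradiction at a first zero of $f'$ (which is in fact slightly cleaner than the paper's Proposition \ref{PropfDecr}, since it avoids the change of variables $(x,y,z)$), the contradiction at a first return of $\phi'$ to $1$, and the balance $(1-\phi'^2)/\phi\sim-\phi'f'$, which reproduces the paper's own computation $\lim_{t\to+\infty}\phi^2/t=2/\sqrt{|6q+5|}$.

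The genuine gap sits exactly where you defer: the claims $\phi''<0$ and $\phi'>0$ are \emph{not} ``of the same flavor'' as the elementary first-time arguments, and $6q_\ell+5<0$ is not the condition that makes them work. At a first zero $t_o$ of $\phi''$ one finds
\[
\phi^{(3)}(t_o)=-\phi'e^{2f}+\frac{f'}{\phi}\left(\phi'^2-2\phi^2e^{2f}\right),
\]
whose sign is not determined by the sign pattern together with Proposition \ref{PropEst} alone: one needs $\phi'^2\ge 2\phi^2e^{2f}$, which the paper obtains by a second, nested first-time argument on the auxiliary function $\psi=\phi'-\sqrt2\,\phi e^f$ (Proposition \ref{conc}); closing that argument requires $5\phi^2e^{2f}<1$, i.e. $5(2|6q+5|)^{-1/2}<1$, and this is precisely where the threshold $q<-\tfrac{35}{12}$, hence the specific form of $q_\ell$, comes from --- your proposal attributes the choice of $q_\ell$ only to making $6q_\ell+5<0$, which is insufficient. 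There is also a circularity to resolve: Proposition \ref{PropEst} \emph{assumes} $\phi''\le0$, so it must be invoked on $[0,t_o)$ inside the first-time argument rather than globally. Two smaller gaps: in the asymptotics you assert that $f'$ is monotone, which is neither proved nor needed --- the paper instead establishes a linear bound $f\le at+M$ (Lemma \ref{f}) by yet another first-time argument and then solves the conservation law for $f'$ as a quadratic to get $f'\to-\sqrt{|6q+5|}$; and for item iv) you must first show $\mathrm{Iso}^o(\R^3,g_\ell)=\SO(3)$ (the paper rules out a transitive action via the positive Ricci lower bound) before an abstract isometry can be assumed to fix the origin and hence to preserve $\lim_{t\to0}K_{\mathrm{tan}}$.
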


\medskip
\noindent
{\bf Notation.} Throughout the paper, Lie groups will be denoted by capital letters and their  Lie algebras will be denoted by the respective gothic letters.
When a Lie group $\G$ acts on a manifold $M$, the vector field associated to any $X\in\gg$ will be denoted by $\widehat X$.

\section{Generalized Ricci solitons}\label{GRSSect}

In this section, we review the correspondence between generalized Ricci solitons and self-similar solutions of the generalized Ricci flow.
We begin recalling the following.

\begin{definition}
A Riemannian metric $g$ and a closed $3$-form $H$ on $M$ define a {\em generalized Ricci soliton} (shortly {\em GRS}) if
there exist a vector field $X\in\Gamma(TM)$ and a real number $\lambda$ such that
\begin{equation}\label{GRSeq}
\begin{split}
\Ric_g 	&=  \lambda\,g -\frac12\mathcal{L}_Xg+\frac14 \cH_{g,H},\\
\Delta_gH	&= 2\lambda\,H-\mathcal{L}_XH.
\end{split}
\end{equation}
A GRS is said to be {\em expanding}, {\em shrinking} or {\em steady} if $\lambda<0$, $\lambda>0$ or $\lambda=0$, respectively.
A GRS is said to be {\em of gradient type} if $X= \nabla f$, for some $f\in C^\infty(M)$. In such a case, the first equation in \eqref{GRSeq} becomes
\beq\label{GRSgrad}
\Ric_g =  \lambda\,g -\mathrm{Hess}_g(f)+\frac14 \cH_{g,H}.
\eeq

\end{definition}

\begin{remark}
If the pair $(g,H)$ defines a GRS with corresponding vector field $X$ and expansion constant $\lambda$,
then for every $\a\in \mathbb R^+$
the pair $(\a g, \a H)$ also defines a GRS with corresponding vector field $\a^{-1}X$ and expansion constant $\a^{-1} \lambda$.
Moreover also $(g,-H)$ defines a GRS.
\end{remark}

\smallskip

The right-hand side of the generalized Ricci flow equation \eqref{GRF} is invariant under diffeomorphisms and under the
scaling $(g,H)\mapsto(\alpha\,g,\alpha\,H)$, where $\a\in\R^+$.
Indeed, given any $\a\in\R^+$, it is well-known that $\Ric_{\a g} = \Ric_g$,
and a straightforward computation shows that
\[
\begin{split}
\cH_{\a g,\a H}(X,Y) &= (\a g)(\imath_X\a H,\imath_Y \a H) = \frac{1}{\a^2}g(\imath_X\a H,\imath_Y \a H) = \cH_{g,H}(X,Y),\\
\Delta_{\a g}(\a H) &= \frac{1}{\a}\Delta_g(\a H) = \Delta_gH.
\end{split}
\]
On the other hand, given any diffeomorphism $\varphi \in \mathrm{Dif{}f}(M)$, one has $\Ric_{\f^*g} = \f^*\Ric_g$. Moreover,
the identity $\star_{\f^*g} = \f^*\star_g(\f^{-1})^*$ implies that $\delta_{\f^*g} = \f^*\delta_g(\f^{-1})^*$, whence the following identities follow
\[
\begin{split}
\cH_{\f^*g,\f^*H}(X,Y)	&=	2 \star_{\f^*g}\left(\imath_X \f^*H \wedge \star_{\f^*g} (\imath_Y \f^*H)\right)\\
					&=	2 \f^*\left[ \star_{g}\left(\imath_{\f_*X} H\wedge \star_{g} (\imath_{\f_*Y}  H)\right)\right] = (\f^*\cH_{g,H})(X,Y),\\
\Delta_{\f^*g}(\f^*H)		&=	\f^*\Delta_g(\f^{-1})^*(\f^*H) = \f^*\Delta_gH.
\end{split}
\]

\smallskip

The previous properties allow one to characterize self-similar solutions of the GRF starting at a given pair $(g(0),H(0)) = (g,H)$ by means
of the GRS equation \eqref{GRSeq} for $(g,H)$.

Assume that $(g_t,H_t) \coloneqq (\sigma_t\f_t^*g,\sigma_t\f_t^*H)$ is a {\em self-similar} solution to the GRF \eqref{GRF}
starting at $(g,H)$,where $\sigma_t$ is a positive scalar satisfying $\sigma_0=1$, and $\f_t\in\mathrm{Dif{}f}(M)$ is a one-parameter family
of diffeomorphisms such that $\f_0=\mathrm{Id}_M$. Then, differentiating with respect to $t$ and evaluating at $t=0$, one gets
\[
-2\Ric_{g} +\frac12 \cH_{g,H} 	= \left.\frac{\partial}{\partial t}g_t \right|_{t=0}
						= \left.\left(\sigma'_t\f_t^*g+\sigma_t\f_t^*(\mathcal{L}_Xg)\right)\right|_{t=0}
						= \sigma'_0\,g + \mathcal{L}_Xg,
\]
where $X\in\Gamma(TM)$ is the vector field such that $X_{\f_t(x)} = \frac{d}{dt}\f_t(x)$, for all $x\in M.$
Thus, $g$ solves the first equation in \eqref{GRSeq} with $\lambda = -\tfrac12 \sigma'_0$. Similarly, one has
\[
-\Delta_gH	= \left.\frac{\partial}{\partial t}H_t \right|_{t=0}
			= \left.\left(\sigma'_t\f_t^*H+\sigma_t\f_t^*(\mathcal{L}_XH)\right)\right|_{t=0}
			= \sigma'_0\,H + \mathcal{L}_XH,
\]
and thus $H$ solves the second equation in  \eqref{GRSeq}.

Conversely, assume that the pair $(g,H)$ is a GRS solving \eqref{GRSeq}.
Let $\sigma_t\coloneqq 1- 2\lambda t$ be defined on the maximal interval $I\subseteq \R$ where it is positive,
and let $\f_t\in\mathrm{Dif{}f}(M)$ be the family of diffeomorphisms generated by the vector field $Y_t \coloneqq \tfrac{1}{\sigma_t} X$
and satisfying $\f_0=\mathrm{Id}_M.$
Then, the pair $(g_t,H_t) = (\sigma_t\f_t^*g,\sigma_t\f_t^*H)$ is a self-similar solution to the GRF \eqref{GRF}. Indeed, one has
\[
\begin{split}
\frac{\partial}{\partial t}g_t 	&= \sigma'_t\f_t^*g+\sigma_t\f_t^*(\mathcal{L}_{Y_t} g)	
					= \f_t^*\left(-2\lambda g + \mathcal{L}_X g \right)
					= \f_t^*\left(-2\Ric_g +\frac12 \cH_{g,H}\right)\\
					&= -2\Ric_{\f_t^*g} +\frac12 \cH_{\f_t^*g,\f_t^*H}
					= -2\Ric_{g_t} +\frac12 \cH_{g_t,H_t},
\end{split}
\]
and, similarly,
\[
\begin{split}
\frac{\partial}{\partial t}H_t 	&= \sigma'_t\f_t^*H+\sigma_t\f_t^*(\mathcal{L}_{Y_t} H)
					= \f_t^*\left(-2\lambda H + \mathcal{L}_X H \right)
					= \f_t^*\left(-\Delta_gH\right)\\
					&= -\Delta_{\f_t^*g}(\f_t^*H)
					= -\Delta_{g_t}H_t.
\end{split}
\]

It is well-known that compact Ricci solitons with constant scalar curvature must be Einstein.
We conclude this section with an analogous result for steady generalized Ricci solitons.
\begin{proposition}\label{GRStoBRF}
Let $(g,H)$ be a steady generalized Ricci soliton on a compact manifold $M,$ and assume that
$R_g - \tfrac14 |H|_g^2$ is constant.
Then, $(g,H)$ is a BRF pair, namely a fixed point of the generalized Ricci flow.
\end{proposition}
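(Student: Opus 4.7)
The plan is to show that the potential $f$ must be constant, after which \eqref{GRSeq} will collapse at once to the BRF conditions.

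First, I will invoke Corollary~6.11 of \cite{GFS}, cited in the introduction, to conclude that, since $(g,H)$ is a steady GRS on a compact manifold, it is automatically of gradient type. Thus I may write $X=\nabla f$ for some $f\in C^\infty(M)$, and the first equation in \eqref{GRSeq} (with $\lambda=0$) becomes
\[
\Ric_g = -\Hess_g(f) + \tfrac14\cH_{g,H}.
\]

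Next, I will take the pointwise $g$-trace. Using the formula $\cH_{g,H}(X,Y) = 2\star_g(\imath_X H \wedge \star_g \imath_Y H)$ from the definition, a direct computation in an orthonormal frame yields $\mathrm{tr}_g(\cH_{g,H}) = |H|_g^2$ in the paper's normalization. Since $\mathrm{tr}_g(\Hess_g(f)) = -\Delta_g f$ (the Hodge Laplacian on functions equals minus the trace of the Hessian), the traced soliton equation reads
\[
R_g - \tfrac14 |H|_g^2 = \Delta_g f.
\]

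By hypothesis the left-hand side is constant, so $\Delta_g f$ is constant on $M$. Integrating over the compact manifold and using $\int_M \Delta_g f\,\vol_g = 0$ forces this constant to vanish, so $\Delta_g f \equiv 0$. Since a harmonic function on a connected compact manifold is constant, $f$ is constant, hence $\Hess_g(f)=0$ and $\mathcal{L}_{\nabla f} g = \mathcal{L}_{\nabla f} H = 0$. Substituting back into \eqref{GRSeq} yields exactly $\Ric_g = \tfrac14 \cH_{g,H}$ and $\Delta_g H = 0$, which are the BRF pair conditions.

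The argument is genuinely short, and I do not expect any step to be a serious obstacle; the only mild subtlety is the trace identity $\mathrm{tr}_g(\cH_{g,H}) = |H|_g^2$, which makes the coefficient $\tfrac14$ in the scalar identity exactly match the one in the hypothesis $R_g - \tfrac14 |H|_g^2 = \mathrm{const}$, so that no Perelman-type conservation law is needed.
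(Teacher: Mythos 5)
Your proof is correct and follows essentially the same route as the paper: invoke \cite[Cor.~6.11]{GFS} to get gradient type, trace the soliton equation to obtain $R_g - \tfrac14|H|_g^2 = \pm\Delta_g f$, integrate over the compact manifold to force the constant to vanish, and conclude that $f$ is harmonic, hence constant. The only differences are cosmetic (your sign convention for the Laplacian and the explicit final substitution into \eqref{GRSeq}, which the paper leaves implicit).
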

\begin{proof}
By \cite[Cor.~6.11]{GFS}, a compact steady GRS is of gradient type, so that $X=\nabla f$, for some $f\in C^\infty(M)$.
The first equation in \eqref{GRSeq} becomes then \eqref{GRSgrad}. Tracing it with respect to $g$ and recalling that $\lambda=0$, we get
\[
R_g = -\Delta f+\frac14|H|_g^2.
\]
Integrating over $M,$ we see that that $R_g - \frac14 |H|_g^2=0$. This implies that $f$ is constant, and our claim follows.
\end{proof}

\begin{corollary}
A compact homogeneous steady generalized Ricci soliton is a BRF pair.
\end{corollary}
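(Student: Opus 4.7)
The plan is to verify the hypothesis of Proposition \ref{GRStoBRF}, namely that the scalar function $R_g - \tfrac14 |H|_g^2$ is constant on $M$. Once this is shown, the result is immediate.

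By definition, a homogeneous generalized Ricci soliton is one where there exists a Lie group $\G$ acting transitively on $M$ and preserving both the metric $g$ and the closed $3$-form $H$ (equivalently, $g$ and $H$ are $\G$-invariant tensor fields on a homogeneous space $M = \G/\Hg$). First I would note that the scalar curvature $R_g$ is a smooth function on $M$ canonically constructed from $g$; since every isometry of $g$ preserves $R_g$, the function $R_g$ is $\G$-invariant. By transitivity of the $\G$-action, any $\G$-invariant function on $M$ is constant, hence $R_g$ is constant. The same reasoning applies to the pointwise norm $|H|_g^2$: as a smooth function built canonically from the $\G$-invariant pair $(g,H)$, it is itself $\G$-invariant, and therefore constant on $M$.

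Consequently, the combination $R_g - \tfrac14 |H|_g^2$ is constant on $M$. Since $M$ is compact and $(g,H)$ is a steady generalized Ricci soliton, Proposition \ref{GRStoBRF} applies and yields that $(g,H)$ is a BRF pair. There is essentially no obstacle here: the argument is a direct reduction, and the only subtlety is simply spelling out that homogeneity forces both the scalar curvature and the norm of $H$ to be constant, which is routine.
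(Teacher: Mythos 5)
Your argument is correct and is precisely the one the paper intends: homogeneity forces the invariant scalars $R_g$ and $|H|_g^2$ to be constant, so Proposition \ref{GRStoBRF} applies directly. The paper leaves this corollary without a written proof, and your reduction matches its stated reasoning in the introduction.
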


\begin{remark}
The thesis of Proposition \ref{GRStoBRF} also holds when $(g,H)$ is a GRS of gradient type and
$R_g - \tfrac14 |H|_g^2$ is constant.
\end{remark}

\section{Rotationally invariant Generalized Ricci Solitons on $\R^3$: the equations}\label{SecEq}
Consider the group $\G\coloneqq\SO(3)$ acting in a standard way on $\mathbb R^3$ with principal isotropy $\K\cong\SO(2)$,
where $\gk$ is generated by the matrix $A\coloneqq E_{32}-E_{23}$. Then, $\gg=\gk+\gm$, where the $\Ad(\K)$-stable subspace $\gm$ is generated by
\[
e_1\coloneqq E_{12}-E_{21},\quad e_2 \coloneqq E_{13}-E_{31}.
\]
We have $[A,e_1]=e_2$, $[A,e_2]=-e_1$ and $[e_1,e_2]=A$.\par

\smallskip

We consider the metric $g$ defined on $\mathbb R^3\smallsetminus\{0\} \simeq \R^+\times S^2$ as
\beq\label{g}
g= dt^2+\phi^2 d\sigma^2,
\eeq
where $d\sigma^2 = e^1\odot e^1 + e^2\odot e^2$ is the metric of constant sectional curvature $1$ on $S^2\cong \G/\K$,
and $\phi$ is a positive smooth function.
It is well-known (see, e.g., \cite[Lemma A.2]{CC}) that the metric $g$ defined in \eqref{g} extends to a smooth metric on the whole $\mathbb R^3$
if and only if the function $\phi$ extends smoothly as an {\it odd} smooth function on $\R$ with $\phi'(0)=1$.

We then consider a $\G$-invariant $3$-form
\beq\label{H}
H = h(t)\, dt\wedge e^{12},
\eeq

where $e^{12}$ is the volume form of the metric $d\sigma^2$ on $S^2\cong \G/\K$. Notice that $H$ is closed as
\[
dH = h'(t)\,dt\wedge dt\wedge e^{12} - h(t)\, dt\wedge de^{12}=0,
\]
and that it admits the potential $\left(\int_0^th(s)ds\right) e^{12}$.
We now compute
\[
\star_g H = \frac{h}{\phi^2},\qquad d\star_g H= \left(\frac{h}{\phi^2}\right)'dt,\qquad
\delta_gH=-\star_g d\star_g H = -\phi^2\left(\frac{h}{\phi^2}\right)' e^{12}.
\]
Note that
\beq\label{EqNormH}
|H|_g^2 = H_{ijk}g^{il}g^{jm}g^{kn}H_{lmn} = 6\star_g(H \W \star_g H) =  6\frac{h^2}{\phi^4}.
\eeq
Moreover, the symmetric $2$-tensor $\cH_{g,H}$ has the following expression
\beq\label{H2}
\cH_{g,H} = 2\,\frac{h^2}{\phi^4}\, dt^2 + 2\,\frac{h^2}{\phi^2} d\sigma^2 = 2\frac{h^2}{\phi^4}g.
\eeq

Let  $(t,y,z)$ denote the standard coordinates on $\mathbb R^3$, and
fix the normal geodesic $\gamma_t=(t,0,0)$, so that $\hat e_1|_{\g_t}= -t\frac{\partial}{\partial y}$ and
$\hat e_2|_{\g_t}= -t\frac{\partial}{\partial z}$.
As both $H$ and the standard volume form $\vol_o$ are $\G$-invariant, we see that $H= \frac{h}{t^2}\,\vol_o$ along $\g$.
Using this, we see that $H$ admits a smooth extension to the whole $\mathbb R^3$
if and only if $h$ extends as an even function with $h(0)=0$.\par

\smallskip

We now consider the $\G$-invariant vector field $X=\nabla f= f'\,\xi$, where $\xi\coloneqq \frac{\partial}{\partial t}$,
for some smooth even function $f$.
Recall that for every pair of vector fields $Y,Z$ the following identities hold $\mathcal L_Xg(Y,Z) = 2 g(\nabla_YX,Z) = 2YZf - 2\nabla_YZ\cdot f$.
We then obtain
\[
\begin{split}
\mathcal L_Xg(\xi,\xi)		&= 2g(\n_\xi f'\xi,\xi) = 2f'',\\
\mathcal L_Xg(e_1,e_1)	&= X\cdot g(e_1,e_1) = f'g(e_1,e_1)' = 2\phi\phi' f'.
\end{split}
\]
Moreover
\[
\mathcal L_XH(\xi,e_1,e_2) = X \left(H(\xi,e_1,e_2)\right) - H([X,\xi],e_1,e_2) = f'h' + f'' h = (f'h)'.
\]

Using the expression of the Ricci tensor of the metric $g$ (see, e.g., \cite[Ch.~7, Cor.~43]{Oneill}),
the equations \eqref{GRSeq} can be written as follows
\begin{equation}\label{GRSeq1}
\left\{\begin{split}
1-(\phi')^2 -\phi\phi'' 	&=  \lambda\,\phi^2-\phi\phi'\, f' + \frac {h^2}{2\phi^2},\\
-2\phi\phi'' &=(\lambda - f'')\phi^2 + \frac {h^2}{2\phi^2},\\
\left(\phi^2\left(\frac{h}{\phi^2}\right)'\right)' &= -2\lambda\,h+ (f'h)',
\end{split}\right.
\end{equation}
where the first equation is $\Ric(e_1,e_1)$ and the second one is $\phi^2 \Ric(\xi,\xi)$.

When $\lambda =0$, the system \eqref{GRSeq1} becomes
\begin{equation}\label{GRSeq2}
\left\{\begin{split}
1-(\phi')^2 -\phi\phi'' 	&=  -\phi\phi'\, f' + \frac {h^2}{2\phi^2},\\
-2\phi\phi'' &=- f''\phi^2 + \frac {h^2}{2\phi^2},\\
\left(\phi^2\left(\frac{h}{\phi^2}\right)'\right)' &=  (f'h)'.
\end{split}\right.
\end{equation}
From the last equation we obtain
\beq\label{cost}
\phi^2\left(\frac{h}{\phi^2}\right)' - f'h = \mbox{\rm{const}}.
\eeq
We note that the function $\frac{h}{\phi^2}$ extends smoothly over the origin, so that the left-hand side of \eqref{cost} vanishes at the origin,
forcing the constant on the right-hand side to vanish. Therefore
\[
\phi^2\left(\frac{h}{\phi^2}\right)' = f'h.
\]
If we set $w\coloneqq \frac{h}{\phi^2}$, then $w$ must extend over the origin as an even function and it satisfies for $t>0$
\[
w'=f'w.
\]
Hence, there exists $k\in\mathbb R$ so that $w= k\, e^f$, namely
\beq\label{h}
h=k\,  \phi^2e^f.
\eeq
Note that $f$ is determined up to an additive constant, so we can suppose to have $f(0)=0$.
The system  \eqref{GRSeq2} is now written as
\begin{equation}\label{GRSeq33}
\left\{\begin{split}
1-(\phi')^2 -\phi\phi'' 	&=  -\phi\phi'\, f' + c^2 e^{2f}\phi^2,\\
-2\phi\phi'' &=- f''\phi^2 + c^2 e^{2f}\phi^2,\\
\end{split}\right.
\end{equation}
where $c^2=\frac{k^2}2\geq 0$, the function  $\phi$ is odd with $\phi'(0)=1$ and $f$ is even with $f(0)=0$.
The parameter $k$ can be taken to be non-negative, as $(g,H)$ being a GRS implies $(g,-H)$ is so as well.
The equation for steady Ricci solitons is obtained when $k=0$.
It is proved in \cite{Bryant} that there exists a unique complete $\G$-invariant steady Ricci soliton on $\R^3$, up to homothety.

\begin{remark}
Special solutions of the above system are given by rotationally invariant BRF pairs.
It is well-known that such solutions are exhausted by constant curvature metrics together with $3$-forms given by multiples of their volume forms,
see \cite[Prop.~3.55]{GFS}.
Explicitly, if we put $f=0$, the system \eqref{GRSeq33} gives
\[
\left\{\begin{split}
1-(\phi')^2 -\phi\phi'' 	&=   c^2 \phi^2,\\
\phi'' &= -\frac {c^2}2 \phi,\\
\end{split}\right.
\]
whose solution satisfying the boundary condition $\phi(0)=0$, $\phi'(0)=1$ is $\phi(t) = \frac 1\b \sin(\b t)$, where $\b\coloneqq\sqrt{\frac{c^2}2}\neq 0$.
This is defined on $\left[0,\frac{\pi}{\b}\right)$ and converges to the flat solution when $\b\to 0$.
\end{remark}

\begin{remark}\label{Remc1}
Given a GRS ($g,H,X)$ and $\b\in\mathbb R$, $\b\neq 0$,
we consider the data $(\tilde g \coloneqq \b^2\, g,\tilde{H}\coloneqq \b^2H,\tilde{X}\coloneqq \b^{-2}X)$.
Putting $s=\b\, t$, we have
\[
\tilde g = \b^2 g = d(\b t)^2 + \b^2 \phi^2 d\sigma^2 = ds^2 + \left(\b \phi\left(\frac s\b\right)\right)^2d\sigma^2 =  ds^2 + \left(\tilde \phi(s)\right)^2d\sigma^2,
\]
with $\tilde\phi(s) = \b \phi(\frac s\b)$. Moreover
\[
\frac 1{\b^2} \nabla f= \frac 1{\b^2} X = \tilde X = \widetilde{\nabla} \tilde f = \frac 1{\b^2} \nabla \tilde f,
\]
so that $\tilde f(s) = f(\frac s\b)$, as both vanish at $t=0$, $s=0$.
We then have $\tilde H = \b^2 H $, hence
\[
\tilde h(s)\, ds\wedge e^{12} = \tilde H = \b^2\, H = \b^2\, h(t)\, dt\wedge e^{12} = \b\,h\left(\frac s\b\right)\, ds\wedge e^{12},
\]
so that
\[
\tilde h(s) = \b\, h\left(\frac s\b\right).
\]

When $H\equiv0$, i.e., when we consider the steady Ricci soliton equation, the system \eqref{GRSeq33} is invariant under the transformation
$(\phi,f)\mapsto (\tilde \phi,\tilde f)$, while in case $H\neq 0$ this invariance breaks down.
If we now compare
\[
h=k\, e^f \phi^2,\qquad \tilde h=\tilde k\, e^{\tilde f} \tilde \phi^2,
\]
we get
\[
k = \tilde k\, \b.
\]
This means that, up to homothety, we can suppose $c=\frac{k^2}{2}=1$  whenever the $3$-form $H$ is not zero.
\end{remark}

\begin{remark}
The steady generalized Ricci soliton equation implies the existence of a closed $b$-field symmetry, namely
$b = \delta_gH +\imath_XH$ (cf.~\cite[Sect.~4.4.2]{GFS}). The above computations show that $b=0$ in the radially symmetric Ansatz we are considering:
\[
b = \delta_gH +\imath_XH = \left( -\phi^2\left(\frac{h}{\phi^2}\right)' +f'h \right)e^{12} = 0.
\]

\end{remark}

\section{A conservation law}\label{SecCL}
In this section we prove Proposition \ref{IntroCL}, establishing the existence of a conservation law for the system \eqref{GRSeq33}.
This will turn out to be a fundamental tool in proving the existence of complete solitons.

\begin{proof}[Proof of Proposition \ref{IntroCL}]
Recall the following formula for steady generalized Ricci solitons of gradient type \cite[Prop.~4.33, (3)]{GFS}
\beq\label{NablaSteady}
d\left(R_g -\frac{1}{12}|H|_g^2 + 2\Delta f - |\nabla f|_g^2\right) = 0,
\eeq
where $f$ is the potential function of the gradient vector field $X$.

Tracing the gradient soliton equation \eqref{GRSgrad} with respect to $g$ and putting $\lambda=0$, we obtain
\[
R_g = -\Delta f + \frac14 |H|_g^2.
\]
Moreover, we have
\[
\Delta f 	= \mathrm{div}_g\left(f'\frac{\partial}{\partial t}\right) = \frac{1}{\phi^2}\frac{\partial}{\partial t}\left(\phi^2 f'\right)
		= f'' +2\frac{\phi'}{\phi}f'.
\]
Plugging these expressions into \eqref{NablaSteady} and using \eqref{EqNormH} and \eqref{h}, we get
\beq\label{pint1}
0 	= d\left(\Delta f +\frac{1}{6}|H|_g^2 - |\nabla f|_g^2\right)
	= \left(f'' +2\frac{\phi'}{\phi}f' +k^2e^{2f} - (f')^2\right)'.
\eeq
\end{proof}

We note that the conservation law \eqref{pint1} obtained here coincides with the conservation law in Bryant's paper \cite[eq.~(2.5)]{Bryant}
when $k=0$. Indeed, using the equations \eqref{GRSeq33} and substituting $f''$ and $\phi''$ in \eqref{pint1}, we obtain
\beq\label{pint2}
\left(2\frac{1-(\phi')^2}{\phi^2} + 4\frac{\phi'}{\phi}f' + \frac{k^2}2 e^{2f} - (f')^2\right)' = 0,
\eeq
 and thus
\beq\label{pint}
\left(f'-2\frac{\phi'}{\phi}\right)^2 - 2\frac{1+(\phi')^2}{\phi^2} -\frac{k^2}{2} e^{2f} = C,
\eeq
for some constant $C$.
Throughout the following, we will always take $k^2=2$, i.e., $c=1$ (cf.~Remark \ref{Remc1}).

\section{Short time existence}\label{SecSTE}
In this section we prove Proposition \ref{IntroSTE}.
We begin rewriting the system \eqref{GRSeq33} as follows
\begin{equation}\label{GRSeq3}
\left\{\begin{split}
\phi'' 	&=  \frac {1-\phi'^2}{\phi}+\phi'f'- \phi\,e^{2f},\\
f'' 	&=2\frac {1-\phi'^2}{\phi^2}+2\frac{\phi'}{\phi}f' - e^{2f},
\end{split}\right.
\end{equation}
where we have put $c= \frac{k^2}{2}=1$.
We recall that we are interested in solutions $(\phi,f)$ where $\phi$ is odd with $\phi'(0)=1$ and $f$ is even with $f(0)=0$. Notice also that
\beq\label{linkeqs}
\phi\,f'' -2\phi'' = \phi\,e^{2f}.
\eeq

From \eqref{GRSeq3} we compute $f''(0)$ and $\phi^{(3)}(0)$ using limits and de l'H\^opital rule
\[
\begin{split}
\phi^{(3)}(0)	&= \lim_{t\to 0}\left(-2\frac{\phi'\phi''}{\phi} -\frac 1{\phi^2}\phi'(1-\phi'^2) + \phi''f'+\phi'f'' - e^{2f}\phi'\right)\\
			&= f''(0) -1 +  \lim_{t\to 0}\left(-2\frac{\phi'\phi''}{\phi} -\frac 1{\phi^2}\phi'(1-\phi'^2)\right)\\
			&= f''(0) -1 - 2 \phi^{(3)}(0) - \lim_{t\to 0}\frac 1{\phi^2}(1-\phi'^2)\\
			&= f''(0) -1 - 2 \phi^{(3)}(0) + \phi^{(3)}(0)\\
			&= f''(0) -1 -  \phi^{(3)}(0),
\end{split}
\]
whence
\beq\label{der} f''(0)= 2\phi^{(3)}(0) + 1.\eeq
If we take the limit in the second equation in \eqref{GRSeq3}, we obtain again the same relation \eqref{der}.
This implies that the opening terms of the Taylor expansion of $\phi$ and $f$ are as follows
\[
\begin{split}
\phi(t) 	&= t + \frac 16 qt^3 + o(t^4),\\
f(t) 		&= \frac 12 (2q+c^2)t^2 + o(t^3),
\end{split}
\]
where $q\coloneqq \phi^{(3)}(0)$.

\begin{remark}
Recall from Section \ref{SecEq} that $\mathcal L_Xg(\xi,\xi)= 2f''(t)$. Therefore, the quantity $f''(0)= 2\phi^{(3)}(0) + 1 = 2q+1$ can be interpreted
as (half of) the push given by $X=\nabla f$ to the metric $g$ in the radial direction at the origin.
\end{remark}

We now write ${\phi(t)=t\, a(t)}$, where $a$ is an {\it even} function with $a(0)=1$. We have
\[
\phi'=a+ta',~ \phi''=2a'+ta'',~ 1-(\phi')^2 = 1-a^2-2taa' -t^2a'^2,
\]
hence
\[
\frac{1-(\phi')^2}{\phi} = \frac 1t\,\frac{1-a^2}{a} -t\, \frac{a'^2}{a}-2a',\quad
\frac{1-(\phi')^2}{\phi^2} = \frac 1{t^2}\,\frac{1-a^2}{a^2}-\frac 2t \frac{a'}{a} -\, \frac{a'^2}{a^2},
\]
so that the system \eqref{GRSeq3} reads
\begin{equation}\label{GRSeq4}
\left\{\begin{split}
a'' 	&=  \frac 1{t^2}\left(\frac{1-a^2}{a}\right) +\frac 1t (af'-4a') + \left(a'f'-\frac{a'^2}{a} -a e^{2f}\right), \\
f'' &=\frac 2{t^2}\,\left(\frac{1-a^2}{a^2}\right)+\frac 2t\left( f' -2\frac{a'}{a}\right)+\left(2\frac{a'}af' -\, 2\frac{a'^2}{a^2}- e^{2f}\right).
\end{split}\right.
\end{equation}
The initial conditions are
\[
(a(0),f(0))=(1,0),\quad (a'(0),f'(0))=(0,0).
\]
If we set $P\coloneqq(a,f)$ and $Q\coloneqq(a',f')$, then the singular system \eqref{GRSeq4} can be written as
\begin{equation}\label{GRSeq5}
\left\{\begin{split}
P' 	&= Q, \\
Q' &= \frac 1{t^2} A(P) + \frac 1t B(P,Q) + C(P,Q),
\end{split}\right.
\end{equation}
with initial conditions $P(0)=(1,0),Q(0)=(0,0)$, where
\[
\begin{split}
A((a,f)) 	&= \left(\frac{1-a^2}{a},2\frac{1-a^2}{a^2}\right),\\
B((a,f,a',f'))	&= \left(af'-4a',2\left(f'-2\frac{a'}{a}\right)\right).
\end{split}
\]

Following the approach used in \cite{EW} and based on Malgrange Theorem (see \cite[Thm.~7.1]{M}), we aim at finding formal power series
$\hat P(t)=  \sum_{n=0}^\infty\frac{p_{2n}}{(2n)!}t^{2n}$ and $\hat Q(t) = \sum_{n=1}^\infty\frac{p_{2n}}{(2n-1)!}t^{2n-1}$
which satisfy the system \eqref{GRSeq5}.
As $A(P(0))=0$, we can write
\[
A(\hat P(t))= \sum_{n=1}^{\infty}\frac{a_{2n}}{(2n)!}t^{2n},~
B(\hat P(t),\hat Q(t)) = \sum_{n=0}^\infty \frac{b_{2n+1}}{(2n+1)!}t^{2n+1},~
C(\hat P(t),\hat Q(t)) = \sum_{n=0}^\infty \frac{c_{2n}}{(2n)!}t^{2n},
\]
for some coefficients $a_{2n},b_{2n+1},c_{2n}\in\mathbb R^2$, so that $(\hat P,\hat Q)$ satisfies the system if and only if
\beq\label{p2n}
p_{2n+2}= \frac{a_{2n+2}}{(2n+2)(2n+1)} + \frac{b_{2n+1}}{2n+1} + c_{2n},\quad n\geq 0.
\eeq
Since
\[
\begin{split}
a_{2n+2} 	&= \left.dA\right|_{P(0)}\cdot p_{2n+2} ~ \operatorname{mod}\ (p_0,p_2,\ldots,p_{2n}),\\
b_{2n+1} 	&= 	\left.\frac{d^{2n+1}}{dt^{2n+1}}B( \hat P(t),\hat Q(t))\right|_{t=0}
	= \left.\frac{d^{2n}}{dt^{2n}}\left(\frac{\partial B}{\partial P}\cdot \hat{P}'(t) + \frac{\partial B}{\partial  Q}\cdot \hat{P}''(t)\right)\right|_{t=0} \\
		&= \left.\frac{\partial B}{\partial  Q}\right|_{( P(0),Q(0))}\cdot  p_{2n+2} ~ \operatorname{mod}\ (p_0,p_2,\ldots,p_{2n}),
\end{split}
\]
equation \eqref{p2n} can be rewritten in the form
\[
p_{2n+2} = \frac{1}{(2n+2)(2n+1)} dA|_{P(0)}\cdot p_{2n+2} + \left.\frac{1}{2n+1}\frac{\partial B}{\partial  Q}\right|_{( P(0),Q(0))}\cdot  p_{2n+2} + d_{2n},
\]
where $d_{2n}\in\mathbb R^2$ are suitable expressions in terms of $p_0,p_2,\ldots,p_{2n}$. It follows that
there exists a formal series solution if and only if, for every $n\geq0$,
\[
L_{2n}\cdot   p_{2n+2} = d_{2n},
\]
where $L_{2n}$ is the $2\times 2$-matrix
\[
L_{2n}\coloneqq I - \frac 1{2(n+1)(2n+1)} dA|_{P(0)} - \frac 1{2n+1}\left.\frac{\partial B}{\partial Q}\right|_{(P(0),Q(0))}.
\]

Now
\[
dA|_{P(0)} = \left(\begin{matrix} -2&0\\ -4&0\end{matrix}\right),\qquad
\left.\frac{\partial B}{\partial Q}\right|_{(P(0),Q(0))} = \left(\begin{matrix} -4&1\\ -4&2\end{matrix}\right),
\]
and therefore
\[
L_{2n} =
\left(\begin{matrix} 1+\frac 1{(n+1)(2n+1)}+\frac 4{2n+1} & -\frac 1{2n+1}\\
\frac 2{(n+1)(2n+1)}+\frac 4{2n+1} & 1-\frac 2{2n+1}\end{matrix}\right).
\]
If $n\geq 1$, we have $\det L_{2n} >0$, showing that we can determine $p_{2n+2}$ from $p_0,\ldots,p_{2n}$.
If $n=0$,
\[
\det L_0 = \det\left(\begin{matrix} 6&-1\\6&-1\end{matrix}\right) = 0,
\]
but nevertheless we have already shown that it is possible to determine $a''(0)$ and $f''(0)$ from the system \eqref{GRSeq3},
see \eqref{der}, so that the real parameter $q\coloneqq\phi^{(3)}(0)=3a''(0)$ can be arbitrarily chosen.
This means that for every $q\in\mathbb R$ we can find a formal power series which satisfies the equations of the system \eqref{GRSeq3},
so that by Malgrange Theorem \cite[Thm.~7.1]{M} there exists a smooth solution to the system having that formal power series
as the Taylor expansion at $t=0$.

We now prove Proposition \ref{IntroSTE} on the short time existence of solutions to the system \eqref{GRSeq3} satisfying the required conditions.
This ensures the existence of $\SO(3)$-invariant generalized Ricci solitons on small balls in $\mathbb R^3$.
\begin{proof}[Proof of Proposition \ref{IntroSTE}]
We have already proven the existence of a smooth solution $(\phi,f)$ defined on a neighborhood of $t=0$ and satisfying the conditions
\[
\phi(0)=0,~\phi'(0)=1,~\phi''(0)=0,~\phi^{(3)}(0)=q,\quad f(0)=f'(0)=0.
\]
We are left with proving that we can find an odd function $\phi$ and an even function $f$.
Indeed, if we have found a solution $\tilde P(t)$ with Taylor expansion given by $\hat P$ as above, then $P(t)\coloneqq \tilde P(|t|)$
 is also smooth at $t=0$, as the power series $\hat P$ has only even degree terms.
 Moreover $P(t)$ is again a solution to the system \eqref{GRSeq3}, as it is immediate to check. This concludes the proof.
\end{proof}

\section{Existence of complete steady generalized Ricci solitons}\label{SecComplete}
In this section we prove our main result, namely the existence of $\G$-invariant complete steady generalized Ricci solitons on $\mathbb R^3$.
In particular, we will prove Theorem \ref{IntroCOMP} through several steps.

\smallskip

Let $(\phi,f):[0,L)\to\R^2$ be a maximal solution to the system \eqref{GRSeq3} with $L\in \mathbb R^+\cup\{+\infty\}$,
and let $q\coloneqq \phi^{(3)}(0)$. Clearly, $\phi >0$ on $(0,L)$.

First of all, we consider the relation \eqref{pint1} with $k^2=2$
\[
\left(2e^{2f} + f'' + 2\frac{\phi'}{\phi}f' {-} (f')^2\right)' = 0,
\]
and the relation \eqref{pint2}
\[
\left(2\frac{1-(\phi')^2}{\phi^2} + 4\frac{\phi'}{\phi}f' {-} (f')^2+  e^{2f}\right)' = 0.
\]
Taking the limit as $t\to 0^+$ and using the equations \eqref{GRSeq3}, we get
\beq\label{int1}
2e^{2f} + f'' + 2\frac{\phi'}{\phi}f' {-} (f')^2 = 6q+5,
\eeq
and using the equation for $f''$ we get
\beq\label{int2}
2\frac{1-(\phi')^2}{\phi^2} + 4\frac{\phi'}{\phi}f' {-} (f')^2+  e^{2f} = 6q+5.
\eeq

\begin{proposition}\label{PropfNeg}
If  $6q+5\leq0$, then $f<0$ on $(0,L)$.
\end{proposition}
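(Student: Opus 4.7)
My plan is to convert the statement into a Sturm-type comparison via the substitution $u \coloneqq e^{-f}$. Since $u>0$ automatically, the claim $f<0$ is equivalent to $u>1$, and I will obtain this by showing that $u$ is strictly increasing from the initial value $u(0)=1$.

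The first step is a direct calculation: from $u' = -f'u$ and $u'' = ((f')^2-f'')\,u$ one gets
\[
u'' + 2\frac{\phi'}{\phi}\,u' \;=\; \Big(-f'' + (f')^2 - 2\frac{\phi'}{\phi}f'\Big)\,u.
\]
The bracketed quantity is precisely what is controlled by the conservation law \eqref{int1}: rearranging \eqref{int1} gives $-f''+(f')^2 - 2(\phi'/\phi)f' = 2e^{2f} - (6q+5)$, and since $e^{2f}=1/u^2$, I obtain
\[
u'' + 2\frac{\phi'}{\phi}\,u' \;=\; \frac{2}{u} - (6q+5)\,u.
\]
Under the hypothesis $6q+5\le 0$ and $u>0$, the right-hand side is strictly positive on $(0,L)$.

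Next I would use the identity $u'' + 2(\phi'/\phi)u' = \phi^{-2}(\phi^2 u')'$, valid wherever $\phi>0$, to conclude that $(\phi^2 u')' > 0$ on $(0,L)$. Since $\phi(0)=0$ and $u'(0) = -f'(0)=0$, the product $\phi^2 u'$ tends to $0$ as $t\to 0^+$; hence $\phi^2 u' > 0$ on $(0,L)$, and since $\phi>0$ there this forces $u' > 0$ on $(0,L)$. Thus $u$ strictly increases from $u(0)=1$, giving $u>1$ and therefore $f<0$ on $(0,L)$.

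The proof is essentially bookkeeping and I anticipate no substantive obstacle. The exponential substitution $u=e^{-f}$ is the key device: it absorbs the nonlinear term $(f')^2$ in the conservation law and recasts \eqref{int1} as a linear second-order inequality of Sturm type for $u$, after which the sign analysis reduces to a standard argument using the vanishing of $\phi$ and $u'$ at the origin.
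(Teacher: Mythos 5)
Your argument is correct, but it is a genuinely different route from the paper's. The paper proves $f<0$ by a minimum-point contradiction: since $f''(0)=2q+1<0$, $f$ starts out negative, and at an interior minimum $T_1$ of $f$ on $[0,T]$ (which would have to exist if $f$ returned to zero) one would have $f'(T_1)=0$ and $f''(T_1)\ge 0$, while the conservation law \eqref{int1} forces $f''(T_1)=6q+5-2e^{2f(T_1)}<0$. Your substitution $u=e^{-f}$ instead linearizes \eqref{int1} into
\[
\frac{1}{\phi^2}\bigl(\phi^2 u'\bigr)' \;=\; \frac{2}{u}-(6q+5)\,u \;>\;0,
\]
and the integrating factor $\phi^2$, together with $\phi(0)=0$ and $u'(0)=-f'(0)=0$, yields $\phi^2u'>0$ and hence $u'>0$ on $(0,L)$. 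This is a valid computation (all identities check out, and the limit $\phi^2u'\to 0$ as $t\to 0^+$ is justified by smoothness), and it actually buys more than the statement asks for: you obtain $f'<0$ on $(0,L)$, i.e.\ strict monotonicity of $f$, under the weak hypothesis $6q+5\le 0$. The paper establishes that monotonicity separately in Proposition \ref{PropfDecr}, under the strict hypothesis $6q+5<0$, by passing to the phase variables $x,y,z$ and the rewritten conservation law \eqref{intnew}; your argument subsumes both propositions in one stroke and avoids the change of variables entirely. It would be worth stating explicitly that you are invoking \eqref{int1} only on $(0,L)$, where $\phi>0$, so that the quotient $\phi'/\phi$ is defined; the passage to $t=0$ enters only through continuity of $\phi^2u'$.
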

\begin{proof}
As $2q+1<0$, we have $f''(0)<0$, so that $f$ is negative on some interval $(0,\epsilon)\subset (0,L)$.
Suppose there is a point $T\in(0,L)$ with $f(T)=0$. Then there exists a point $T_1\in (0,L)$ where $f$ realizes its minimum in the interval $[0,T]$.
Then $f''(T_1)\geq 0$ and $f'(T_1)=0$. On the other hand, by \eqref{int1} we have
\[
f''(T_1)= 6q+5 - 2e^{2f(T_1)} < 6q+5\leq 0,
\]
a contradiction. Therefore $f$ is always negative.
\end{proof}

In order to prove the next result on the monotonicity of $f$, we introduce new variables
following \cite[p.~18]{CC}. We set
\[
x\coloneqq \phi',\quad y\coloneqq 2\phi'-f'\phi,\quad z\coloneqq e^{f}\phi.
\]
We also introduce a new parameter $r$ with
\[
r(t) = \int_{t_o}^t \frac 1{\phi(s)}ds.
\]
Note that $\phi'(0)=1$ implies that $\lim_{t\to 0}r= -\infty$.
Using the new parameter $r$ the equations \eqref{GRSeq3} read
\[
\frac{dx}{dr} = \frac{dx}{dt}\, \phi = \phi\phi'' = \phi\phi'f'+1-\phi'^2 -  e^{2f}\phi^2,
\]
hence
\[
\frac{dx}{dr} = x^2-xy +1 - z^2.
\]
Similarly,
\[
\frac{dy}{dr} = x(y-2x)- z^2,
\]
and
\[
\frac{dz}{dr} = \phi(e^f f'\phi + e^f\phi') = z(2x-y) + xz = (3x-y)z.
\]
Summing up, we have
\begin{equation}\label{Syst}
\left\{\begin{split}
\frac{dx}{dr} &= x^2-xy +1 - z^2,\\
\frac{dy}{dr} &= x(y-2x)- z^2,\\
\frac{dz}{dr}  &=(3x-y)z.\\
\end{split}\right.
\end{equation}

Accordingly, we find that \eqref{int2} reads now
\beq\label{intnew}
2x^2-y^2+z^2+2= (6q+5)\phi^2.
\eeq

\begin{proposition}\label{PropfDecr}
If $6q+5<0$, then $f$ is strictly decreasing.
\end{proposition}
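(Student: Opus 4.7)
The natural approach is to combine the initial conditions with the conservation law \eqref{int1} at a prospective first zero of $f'$. Since $6q+5<0$ forces $q<-5/6$ and hence $f''(0)=2q+1<0$, together with $f'(0)=0$ we immediately obtain $f'(t)<0$ for all sufficiently small $t>0$.

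Suppose, for contradiction, that $f'$ vanishes again on $(0,L)$, and let $t_0\in(0,L)$ be the smallest such point. Then $f'<0$ on $(0,t_0)$ and $f'(t_0)=0$. Evaluating \eqref{int1} at $t_0$ kills the linear and quadratic $f'$-terms, leaving
\[
f''(t_0)=6q+5-2e^{2f(t_0)}<0,
\]
since $6q+5<0$ and $e^{2f(t_0)}>0$. On the other hand, because $f'(t)<0=f'(t_0)$ for $t<t_0$ close to $t_0$, the difference quotient $(f'(t)-f'(t_0))/(t-t_0)$ has positive numerator and negative denominator, so its limit as $t\to t_0^-$, which equals $f''(t_0)$ by smoothness, must satisfy $f''(t_0)\geq 0$. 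This contradicts the previous display, so no such $t_0$ exists and $f'$ stays strictly negative on $(0,L)$.

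The argument is essentially a one-step calculation: the key observation is that the conservation law \eqref{int1} is linear and quadratic in $f'$, so it degenerates to an explicit formula for $f''$ at any critical point of $f$. I do not anticipate a genuine obstacle here; the only subtlety worth flagging is the mild sign computation $2q+1<0$, which is strictly weaker than the hypothesis $6q+5<0$ and is therefore automatic. The proposition \ref{PropfNeg} on negativity of $f$ is not needed in this argument.
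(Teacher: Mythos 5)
Your argument is correct, but it is not the route the paper takes. The paper proves this proposition by passing to the variables $x=\phi'$, $y=2\phi'-f'\phi$, $z=e^f\phi$ with the rescaled parameter $r=\int \phi^{-1}dt$, writing the equations as the first-order system \eqref{Syst}, and showing that $u=y-2x=-\phi f'$ stays positive: at a prospective first zero of $u$ one has $y=2x$, and the rewritten conservation law \eqref{intnew} forces $\tfrac{du}{dr}=2x^2+z^2-2>2z^2>0$ there, a contradiction. You instead evaluate the conservation law \eqref{int1} directly at a prospective first zero $t_0$ of $f'$: the terms linear and quadratic in $f'$ drop out, leaving $f''(t_0)=6q+5-2e^{2f(t_0)}<0$, which contradicts the one-sided inequality $f''(t_0)\geq 0$. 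This is exactly the mechanism the paper itself uses to prove Proposition \ref{PropfNeg} (there applied at an interior minimum of $f$), so your proof is arguably more economical: it avoids the change of variables and the system \eqref{Syst} entirely, and you are right that Proposition \ref{PropfNeg} is not needed. What the paper's approach buys is the phase-space formulation of \cite[p.~18]{CC}, which is natural in Bryant's framework, but for this particular proposition it is not essential. One small slip in your writeup: for $t<t_0$ the difference quotient $(f'(t)-f'(t_0))/(t-t_0)$ has \emph{negative} numerator (since $f'(t)<0=f'(t_0)$) and negative denominator; the quotient is therefore positive and its limit gives $f''(t_0)\geq 0$, which is the conclusion you want, so the argument stands once the signs are stated correctly.
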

\begin{proof}
We consider the function $u(r)\coloneqq y-2x$ on a maximal interval $(-\infty,M)$ for some $M\in \mathbb R\cup\{+\infty\}$.
By equations \eqref{Syst} we have
\[
\frac{du}{dr} =  x(y-2x)- z^2 - 2(x^2-xy +1 - z^2) = -4x^2 + 3xy + z^2 -2.
\]
Now $u=2\phi' - \phi f'- 2\phi' = -\phi f'$ and when $r\to-\infty$ (which corresponds to $t=0$) the function $f'$ is negative (as $f''(0)<0$ and $f(0)=0$),
so that $u$ is positive on some interval $(-\infty,r_o)$. Suppose that $u(r_o)=0$, namely $y=2x$ at $r=r_o$. Then
\[
\left.\frac{du}{dr}\right|_{r=r_o} = 2x^2 + z^2 -2|_{r=r_o}\leq 0.
\]
On the other hand, when $y=2x$, equation \eqref{intnew} reads  $-2x^2+z^2+2= (6q+5)\phi^2<0$, hence $2x^2 > z^2+2$ and therefore
\[
\left.\frac{du}{dr}\right|_{r=r_o} = 2x^2 + z^2 -2|_{r=r_o} > (z^2+2 + z^2 - 2) = 2z^2 >0,
\]
a contradiction. Therefore $u>0$ and $f'<0$ everywhere.
\end{proof}

\begin{proposition}\label{phi'minore}
If $6q+5<0$, then $\phi'<1$ on the interval $(0,L)$.
\end{proposition}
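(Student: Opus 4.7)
The plan is to argue by contradiction, using the first equation of \eqref{GRSeq3} together with the strict monotonicity of $f$ established in Proposition \ref{PropfDecr}.

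First I would observe that the hypothesis $6q+5<0$ gives $q<-5/6<0$, so the Taylor expansion
\[
\phi'(t) = 1 + \frac{q}{2}t^2 + O(t^3)
\]
(using $\phi'(0)=1$, $\phi''(0)=0$, $\phi^{(3)}(0)=q$) shows that $\phi'(t)<1$ on some right-neighborhood of $0$. Hence the set $\{t\in(0,L):\phi'(t)\geq 1\}$, if nonempty, has a positive infimum $T$. By continuity $\phi'(T)=1$, and since $\phi'(t)<1$ on $(0,T)$ while $\phi'(T)=1$, the function $\phi'$ attains the value $1$ from below at $T$, so $\phi''(T)\geq 0$.

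Next I would plug $\phi'(T)=1$ into the first equation of \eqref{GRSeq3}: the term $(1-\phi'^2)/\phi$ vanishes, giving
\[
\phi''(T) = f'(T) - \phi(T)\,e^{2f(T)}.
\]
By Proposition \ref{PropfDecr} we have $f'(T)<0$ strictly, and clearly $\phi(T)>0$, $e^{2f(T)}>0$; therefore the right-hand side is strictly negative, contradicting $\phi''(T)\geq 0$. This contradiction forces $\phi'<1$ on $(0,L)$.

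The only delicate point is ensuring that $f'(T)<0$ is strict (not just $\leq 0$), since a vanishing $f'(T)$ would only yield $\phi''(T)\leq 0$ at best, which would not contradict $\phi''(T)\geq 0$. This is precisely the content of Proposition \ref{PropfDecr}, which shows $u=-\phi f'>0$ throughout the maximal interval; combined with $\phi>0$ it gives the needed strict inequality $f'<0$. Everything else is a routine contradiction argument.
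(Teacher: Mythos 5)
Your proof is correct and follows essentially the same argument as the paper: take the first point where $\phi'=1$, note $\phi''\geq 0$ there, and contradict this via the first equation of \eqref{GRSeq3} together with $f'<0$ from Proposition \ref{PropfDecr}. The only remark worth making is that your concern about the strictness of $f'(T)<0$ is unnecessary: since $\phi(T)>0$ and $e^{2f(T)}>0$, the term $-\phi(T)e^{2f(T)}$ is already strictly negative, so even $f'(T)\leq 0$ would suffice for the contradiction.
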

\begin{proof}
We know that $\phi'(0)=1$ and $\phi'<1$ on some right interval around $t=0$.
Let $t_1$ be a point with $\phi'(t_1)=1$ and $\phi'(t)<1$ on $(0,t_1)$, so that $\phi''(t_1)\geq 0$. Using the first equation in \eqref{GRSeq3},
we have at $t=t_1$
\[
\phi''(t_1) = (f'- \phi e^{2f})|_{t=t_1} <0,
\]
by Proposition \ref{PropfDecr}.
\end{proof}

The next result shows that the completeness of the soliton follows from the monotoniticy of the function $\phi$ for sufficiently negative $q$.
\begin{proposition}\label{lte}
If $6q+5<0$ and $\phi' >0$ on $(0,L)$, then $L=+\infty$ and the soliton is complete.
\end{proposition}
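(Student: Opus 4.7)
My plan is to argue by contradiction: assume $L < +\infty$ and show that $(\phi, \phi', f, f')$ extends continuously to $t = L$ with $\phi(L) > 0$, so that standard ODE continuation yields an extension past $L$, contradicting maximality. Fix any $t_0 \in (0, L)$. The hypothesis $\phi' > 0$ together with Proposition \ref{phi'minore} gives $0 < \phi' < 1$ on $(0, L)$, so $\phi$ is strictly increasing and bounded above by $L$, with $\phi \geq \phi(t_0) > 0$ on $[t_0, L)$. By Proposition \ref{PropfNeg}, $f < 0$, hence $e^{2f} < 1$ is bounded above.

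The key step, and the main obstacle, is to produce a uniform bound on $|f'|$ on $[t_0, L)$. For this I would use the conservation law \eqref{int2}, rewritten as
\[
(f')^2 - 4\frac{\phi'}{\phi} f' = 2\frac{1 - (\phi')^2}{\phi^2} + e^{2f} - (6q+5).
\]
On $[t_0, L)$, both the coefficient $4\phi'/\phi$ (bounded by $4/\phi(t_0)$) and the right-hand side (bounded since $\phi \geq \phi(t_0)$, $\phi' < 1$, $e^{2f} < 1$) are uniformly bounded, so viewing this as a quadratic inequality in $f'$ forces a uniform bound on $|f'|$ via the elementary estimate $x^2 \leq A + B|x| \Rightarrow |x| \leq (B + \sqrt{B^2+4A})/2$. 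Once $|f'|$ is bounded, $f$ is Lipschitz and hence bounded from below on the finite interval $[t_0, L)$; plugging this back into \eqref{GRSeq3} shows that $\phi''$ and $f''$ are uniformly bounded as well. Consequently $(\phi, \phi', f, f')$ is uniformly continuous on $[t_0, L)$ and admits a limit at $t = L$ with $\phi(L) \geq \phi(t_0) > 0$, so restarting \eqref{GRSeq3} at $t = L$ from this initial datum yields a continuation past $L$, contradicting the maximality of $L$. Therefore $L = +\infty$.

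For the completeness assertion, the metric $g = dt^2 + \phi(t)^2 d\sigma^2$ is a smooth warped product on $\R^3$ in which $t$ is the arc-length parameter along the radial geodesic $\gamma_t$ emanating from the origin and the $\SO(3)$-orbits $\{t\} \times S^2$ are compact. Since $t$ now ranges over all of $[0, +\infty)$, any divergent sequence in $\R^3$ corresponds to $t \to +\infty$, and so the resulting Riemannian manifold is geodesically complete by a standard argument for warped products over compact fibers.
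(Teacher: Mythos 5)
Your argument is correct and follows essentially the same route as the paper: both use the conservation law \eqref{int2} to get a uniform bound on $f'$ near a hypothetical finite endpoint $L$ (the paper exploits the sign $f'<0$ where you use a quadratic inequality in $f'$), then deduce boundedness of $f$, $\phi''$, $f''$ and continue the solution past $L$. The completeness step is also the same in substance; the paper merely spells out the Cauchy-sequence/warped-product argument that you invoke as standard.
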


\begin{proof}
Suppose $L<+\infty$. As $\phi$ is increasing and $\phi'<1$, we see that the limit $\lim_{t\to L^-}\phi(t)$ exists and it is finite.
By \eqref{int2} we have that $4\frac{\phi'}{\phi}f'-(f')^2$ is bounded in a neighborhood of $L$. As $f'$ is always negative,
and $\frac{\phi'}{\phi}$ is positive $(0,L)$, we have that
\[
4\frac{\phi'}{\phi}f'-(f')^2 \leq - (f')^2.
\]
Now, if $f'$ were unbounded in a neighborhood of $L$, then we would have a sequence $t_n\to L$ with $f'(t_n)\to-\infty$
and therefore $(4\frac{\phi'}{\phi}f'-(f')^2)|_{t_n}\to-\infty$, a contradiction.
As $f'$ is negative and bounded from below, we see that the limit $\lim_{t\to L^-}f(t)$ exists and is finite.
Now the equations say that $\phi''$ and $f''$ are also bounded and therefore $\phi'$ and $f'$ have a finite limit when $t\to L$.
This implies that we can extend the solution beyond $L$ and this concludes the proof.
This implies that we can extend the solution beyond $L$ and thus that $L=+\infty$. 

We now show that the soliton is complete. Let $\{p_n\}$ be a Cauchy sequence in $\R^3$.
If $p_n\neq0$ we can write $p_n = (t_n,x_n)$, with $t_n\in\R$ and $x_n\in S^2$.
For $m,n\in\mathbb{N}$ we have $p_n \in \G\cdot \gamma(t_n)$ and $p_m \in \G\cdot \gamma(t_m)$ and we see that
\[
d(p_n,p_m) \geq d(\gamma(t_n),\gamma(t_m)) = |t_n-t_m|,
\]
since the normal geodesic $\gamma$ minimizes the distance between orbits.
This shows that $\{t_n\}\subset \R$ is a Cauchy sequence and thus $\lim_{n\to \infty} t_n = t_o\in\R$.
On the other hand, the sequence $\{x_n\}\subset S^2$ (sub)converges to some $x_o\in S^2$.
Therefore $\lim_{n\to \infty}p_n = (t_o,x_o)$, and the claim follows.

\end{proof}

The next result provides a fundamental estimate for the function $\phi e^f$, which is the product of the aperature $\phi$ and $\frac 16|H|$. This estimate will be essential in proving the long time existence.
\begin{proposition}\label{PropEst}
Suppose that
$\phi''\leq 0$ on an interval $[0,A)\subseteq [0,L)$. If $6q+5<0$, then on $[0,A)$ we have
\beq\label{est}\phi e^f\leq (2|6q+5|)^{-1/4}.\eeq
\end{proposition}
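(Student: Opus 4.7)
The plan is to analyze the function $z \coloneqq \phi e^f$ on $[0,A)$ and bound its supremum via the conservation law \eqref{pint} together with the hypothesis $\phi''\leq 0$. Since $z(0)=0$, any supremum on $[0,A)$ is either attained at an interior critical point $t^{*}\in(0,A)$ where $\phi'(t^{*})+\phi(t^{*})f'(t^{*})=0$, or approached as $t\to A^{-}$; the proof reduces to bounding $z$ at such a point.

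At an interior critical point $t^{*}$, one has $f'(t^{*}) = -\phi'(t^{*})/\phi(t^{*})$. Substituting this into the $\phi^2$-multiplied form of \eqref{pint}, namely $(\phi f' - 2\phi')^2 = 2 + 2(\phi')^2 + \phi^2 e^{2f} + |6q+5|\phi^2$, yields the algebraic relation
\[
7\phi'(t^{*})^2 = 2 + z(t^{*})^2 + |6q+5|\phi(t^{*})^2.
\]
Since $\phi''\leq 0$ on $[0,A)$ together with $\phi'(0)=1$ gives $\phi'\leq 1$ throughout, the above specialises to $z^2 + |6q+5|\phi^2 \leq 5$ at $t^{*}$. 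In addition, using \eqref{linkeqs} to write $f'' = 2\phi''/\phi + e^{2f}$ and substituting into the conservation law \eqref{pint1}, and then dropping the nonpositive term $2(\phi'/\phi)f'$ (which requires $\phi' \geq 0$, available near $0$ and extendable by an appropriate bootstrap, together with $f'\leq 0$ from Proposition~\ref{PropfDecr}), one obtains the pointwise differential inequality
\[
(f')^2 \leq 3 e^{2f} + |6q+5|,
\]
which at $t^{*}$ reads $\phi'(t^{*})^2 \leq 3 z(t^{*})^2 + |6q+5|\phi(t^{*})^2$.

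The final step is to combine these algebraic relations with the AM--GM inequality
\[
2 e^{2f} + |6q+5| \,\geq\, 2\sqrt{2|6q+5|}\,e^f,
\]
which upon multiplication by $\phi^2$ rewrites as $2 z^2 + |6q+5|\phi^2 \geq 2\sqrt{2|6q+5|}\,\phi^2 e^f$. The appearance of the fourth root $(2|6q+5|)^{-1/4}$ in the claim corresponds precisely to the factor $\sqrt{2|6q+5|}$ in this AM--GM identity: squaring and using the constraints at $t^{*}$ to bound $\phi^2 e^f$ in terms of $z^2$ and $|6q+5|\phi^2$ should deliver the estimate $2|6q+5|(\phi e^f)^4 \leq 1$.

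The main obstacle is orchestrating the constants so that the inequality closes with the exact factor $2|6q+5|$ rather than a looser multiple; a naive use of $\phi'\leq 1$ together with $e^{2f}\leq 1$ alone only produces a bound of the form $z^2 \leq 5/(1+|6q+5|)$, which is insufficient for moderate $|6q+5|$. The sharp constant must emerge from using the inequality $(f')^2 \leq 3e^{2f}+|6q+5|$ and the AM--GM step above in a coupled manner at $t^{*}$, effectively extracting a quadratic bound for $(|6q+5|\phi^2)(z^2)$ from the conservation law. A secondary technicality is the boundary case in which $z$ is monotone on $[0,A)$ and has no interior critical point; there one passes to the limit $t\to A^{-}$, where the same algebraic relations persist in the limit by continuity.
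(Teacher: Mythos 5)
Your proposal takes a genuinely different route from the paper --- a pointwise algebraic analysis of $z=\phi e^f$ at an interior maximum --- but as it stands it does not prove the stated estimate, and I believe it cannot be made to with this strategy. At a critical point $t^*$ of $z$ the relations you derive, namely $7(\phi')^2=2+z^2+|6q+5|\phi^2$, $\phi'\le 1$, $(\phi')^2\le 3z^2+|6q+5|\phi^2$ and $z^2\le\phi^2$, are jointly consistent with any value of $z^2$ up to $5/(1+|6q+5|)$ (the extra inequality $(f')^2\le 3e^{2f}+|6q+5|$ only imposes a \emph{lower} bound $2\le 20z^2+6|6q+5|\phi^2$ and so does not cut down the maximum). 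Since $5/(1+|6q+5|)>(2|6q+5|)^{-1/2}$ precisely in the range $|6q+5|\in(\approx 0.55,\approx 45)$ --- which contains the values $|6q+5|>\tfrac{25}{2}$ actually used in Proposition \ref{conc} --- the constant cannot be ``orchestrated'' to close: the obstruction you flag at the end is not a technicality but a failure of the method. Two further gaps: (a) if $z$ is monotone on $[0,A)$ there is no interior critical point, and the identity $f'(t^*)=-\phi'(t^*)/\phi(t^*)$ does \emph{not} persist in the limit $t\to A^-$, so the ``boundary case'' is not handled by continuity; (b) the step discarding $2\tfrac{\phi'}{\phi}f'$ needs $\phi'\ge 0$, which is not among the hypotheses and whose proof on all of $[0,L)$ (Proposition \ref{phicresce}) itself relies on the present estimate.

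The reason the exponent $-1/4$ is out of reach pointwise is that in the paper it arises from an \emph{integrated} quantity: one multiplies the conservation law \eqref{int1} by $e^{-f}\phi^2$, applies the AM--GM bound $2e^{f}+|6q+5|e^{-f}\ge 2\sqrt{2|6q+5|}$ to get $(f'e^{-f}\phi^2)'\le -2\sqrt{2}\,C\,\phi^2$ with $C=\sqrt{|6q+5|}$, integrates twice to obtain
\[
e^{-f}\;\ge\; 1+2\sqrt 2\,C\int_0^t\frac{1}{\phi^2(u)}\Bigl(\int_0^u\phi^2(s)\,ds\Bigr)du,
\]
and then uses the concavity hypothesis (via $\int_0^u\phi\ge\tfrac12 u\phi(u)$ and Cauchy--Schwarz) to bound the double integral below by $t^2/8$. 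The estimate follows by maximizing $t/(1+\tfrac{\sqrt2}{4}Ct^2)$ over all $t$, which is exactly where $(\sqrt2\,C)^{-1/2}=(2|6q+5|)^{-1/4}$ comes from. If you want to salvage your approach, you would need to replace the pointwise constraints at $t^*$ with information of this integrated type; the conservation law evaluated at a single point does not contain enough.
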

\begin{proof} We consider the conservation law \eqref{int1}
$$2e^{2f} + f'' + 2\frac{\phi'}{\phi}f' - (f')^2 = 6q+5 = - C^2,$$
with $C\coloneqq \sqrt{|6q+5|}$. We multiply it by $e^{-f}$ to obtain
$$2e^{f} + (f''- (f')^2)e^{-f} + 2\frac{\phi'}{\phi}f'e^{-f}  = -C^2e^{-f},$$
hence
$$(f'e^{-f})' +  2\frac{\phi'}{\phi}f'e^{-f} = -2e^{f} - C^2e^{-f}.$$
As the function $\mathbb R^+\ni s\mapsto 2s+\frac{C^2}s$ has a global minimum at $s= \frac{C}{\sqrt 2}$ with minimum value $2\sqrt 2 C$,
we obtain
$$(f'e^{-f})' +  2\frac{\phi'}{\phi}f'e^{-f} \leq -2\sqrt 2 C,$$
hence
$$(f'e^{-f})'\phi^2 +  2\phi'\phi f'e^{-f} \leq -2\sqrt 2 C\phi^2,$$
so that
$$(f'e^{-f}\phi^2)'\leq -2\sqrt 2 C\phi^2.$$
It then follows that
$$f'e^{-f}\phi^2\leq -2\sqrt 2 C \int_0^t\phi^2(u)du,$$
i.e.,
$$-f'e^{-f}\phi^2\geq 2\sqrt 2 C \int_0^t\phi^2(u)du,$$
whence
$$-f'e^{-f}\geq 2\sqrt 2 C \frac 1{\phi^2}\int_0^t\phi^2(u)du,$$
and therefore
$$e^{-f} - 1 \geq 2\sqrt 2 C \int_0^t\frac 1{\phi^2(u)}\left(\int_0^u\phi^2(s)ds\right)du.$$
Note that the function $\frac 1{\phi^2}\int_0^t\phi^2(u)du$ is smooth at $t=0$ as $\phi'(0)=1$.
We then obtain that
$$
\phi e^f\leq\frac{\phi}{1+ 2\sqrt 2 C \int_0^t\frac 1{\phi^2(u)}\left(\int_0^u\phi^2(s)ds\right)du}\leq \frac{t}{1
+ 2\sqrt 2 C \int_0^t\frac 1{\phi^2(u)}\left(\int_0^u\phi^2(s)ds\right)du}.
$$
We now use the concavity of $\phi$. We have
$$\int_0^u\phi(s)ds \leq \left(\int_o^u\phi^2(s)ds\right)^{1/2} \left(\int_0^u1\, ds\right)^{1/2} = \sqrt u \left(\int_o^u\phi^2(s)ds\right)^{1/2},$$
so that
$$\int_o^u\phi^2(s)ds \geq \frac 1u  \left(\int_o^u\phi(s)ds\right)^{2}.$$
By concavity we have
$$\int_o^u\phi(s)ds\geq \frac 12 u\phi(u),$$
hence
$$\int_o^u\phi^2(s)ds\geq \frac 1u  \frac{u^2}4\phi^2(u) = \frac 14 u\phi^2(u),$$
so that
$$\int_0^t\frac 1{\phi^2(u)}\left(\int_0^u\phi^2(s)ds\right)du \geq \int_0^t\frac 1{\phi^2(u)}\frac 14 u\phi^2(u)du = \frac 18t^2.$$
It then follows that
$$\phi e^f\leq \frac{t}{1+ \frac{\sqrt 2}{4} C t^2}\leq (\sqrt 2 C)^{-1/2}.$$
\end{proof}

We know that $\phi'(0)=1$ and we consider an interval $[0,B)\subseteq [0,L)$ where $\phi'$ is positive.

\begin{proposition}\label{conc}
The function $\phi$ is concave on $[0,B)$ if $q<-\frac 56 -\frac{25}{12} = -\frac{35}{12}$.
\end{proposition}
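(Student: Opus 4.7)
Since $\phi''(0)=0$ and $\phi^{(3)}(0)=q<-\tfrac{35}{12}<0$, the function $\phi''$ is strictly negative on some initial interval $(0,\epsilon)\subset(0,B)$. The strategy is to argue by contradiction: assume that there exists a smallest $t_0\in(0,B)$ with $\phi''(t_0)=0$. Then $\phi''\le 0$ on $[0,t_0]$ and, by minimality of $t_0$, $\phi^{(3)}(t_0)\ge 0$.

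The first technical step is to compute $\phi^{(3)}(t_0)$ explicitly. Differentiating the first equation in \eqref{GRSeq3}, evaluating at $t_0$, and using $\phi''(t_0)=0$ together with the expression for $f''$ provided by the second equation, one obtains after simplification
\[
\phi^{(3)}(t_0)\;=\;\frac{1}{\phi(t_0)^{2}\,\phi'(t_0)}\Bigl[\alpha^{2}-(1-2S)\,\alpha-2S^{2}\Bigr],
\]
where $\alpha:=1-\phi'(t_0)^{2}$ and $S:=\phi(t_0)^{2}e^{2f(t_0)}$. Since $\phi'(t_0)>0$, the sign condition $\phi^{(3)}(t_0)\ge 0$ forces the bracket to be non-negative, and as $\alpha>0$ this in turn forces $\alpha\ge 1-2S$, equivalently $\phi'(t_0)^{2}\le 2S$.

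Next, since $\phi''\le 0$ on $[0,t_0]$ Proposition~\ref{PropEst} applies and gives $\phi(t_0)\,e^{f(t_0)}\le(2C^{2})^{-1/4}$, with $C^{2}=|6q+5|$. The hypothesis $q<-\tfrac{35}{12}$ is equivalent to $C^{2}>\tfrac{25}{2}$, hence $S\le(2C^{2})^{-1/2}<\tfrac{1}{5}$, and combined with the previous step $\phi'(t_0)^{2}<\tfrac{2}{5}$. Using $\phi''(t_0)=0$ to eliminate $f'$ in the conservation law \eqref{int2}, the resulting identity can be rewritten as
\[
(1-\alpha)\,C^{2}\phi(t_0)^{2}\;=\;-\alpha^{2}+(2+3S)\,\alpha+S(S-5).
\]
The right-hand side is monotone increasing in $\alpha$ on $[1-2S,1)$, so this yields the lower bound $C^{2}\phi(t_0)^{2}\ge(1-2S-9S^{2})/(2S)$.

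The contradiction will then follow by combining this lower bound on $\phi(t_0)^{2}$ with the strict concavity inequality $\phi(t_0)<t_0$ (coming from $\phi''<0$ on $(0,t_0)$) and with the sharper pointwise version $\phi(t)e^{f(t)}\le t/(1+\sqrt{2}\,C t^{2}/4)$ of the estimate in Proposition~\ref{PropEst}. I expect this last step to be the main obstacle: the constant $-35/12$ is sharp, and the argument must track the strict inequalities carefully so that the simultaneous saturation of Proposition~\ref{PropEst} and of $\phi(t_0)\le t_0$ (needed for compatibility with $\phi^{(3)}(t_0)\ge 0$) is ruled out precisely by the threshold $C^{2}>\tfrac{25}{2}$.
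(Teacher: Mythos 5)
Your opening reduction is correct and is essentially the same as the paper's: at the first zero $t_0$ of $\phi''$ one indeed has $\phi^2\phi'\,\phi^{(3)}(t_0)=\alpha^2-(1-2S)\alpha-2S^2$ with $\alpha=1-\phi'^2$ and $S=\phi^2e^{2f}$, and $\phi^{(3)}(t_0)\ge0$ forces $\alpha\ge 1-2S$, i.e.\ $\phi'(t_0)^2\le 2\phi(t_0)^2e^{2f(t_0)}$. The whole proof therefore reduces to ruling out $\phi'(t_0)\le\sqrt2\,\phi(t_0)e^{f(t_0)}$, and this is where your plan has a genuine gap. The pointwise constraints you assemble at the single time $t_0$ --- the conservation law \eqref{int2} rewritten as $(1-\alpha)C^2\phi^2=-\alpha^2+(2+3S)\alpha+S(S-5)$, the bound $S\le(2C^2)^{-1/2}<\tfrac15$, the inequality $\phi(t_0)<t_0$, and the pointwise estimate $\phi e^f\le t/(1+\tfrac{\sqrt2}{4}Ct^2)$ --- are simultaneously satisfiable and hence cannot produce a contradiction, no matter how carefully the strict inequalities are tracked. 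For instance, with $C^2=13$, $S=0.19$, $\alpha=0.9$ one gets $\phi\approx0.67$, $f'\approx-3.3<0$, $\phi e^f\approx0.436<(2C^2)^{-1/4}\approx0.443$, and $t_0\approx0.89$ satisfies both $\phi<t_0$ and the pointwise bound; your lower bound $C^2\phi^2\ge(1-2S-9S^2)/(2S)$ also holds. So the contradiction cannot ``follow by combining'' these ingredients.

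What is missing is a dynamic, first-zero argument for the quantity $\psi\coloneqq\phi'-\sqrt2\,\phi e^{f}$ itself, which is how the paper closes the proof: one shows $\psi>0$ on $[0,t_0)$ by computing, at a hypothetical first zero $t_1$ of $\psi$, that $\psi'(t_1)=\tfrac1\phi\bigl(1-5\phi^2e^{2f}\bigr)>0$, the positivity coming precisely from $5S\le 5(2C^2)^{-1/2}<1$, i.e.\ from $q<-\tfrac56-\tfrac{25}{12}$. By continuity this gives $\phi'(t_0)^2\ge 2S(t_0)$, i.e.\ $\alpha\le 1-2S$ at $t_0$; combined with your conclusion $\alpha\ge 1-2S$ the bracket equals $-2S^2<0$, so $\phi^{(3)}(t_0)<0$, the desired contradiction (the paper phrases this as $\phi^{(3)}(t_0)=-\phi'e^{2f}+\tfrac{f'}{\phi}(\phi'^2-2\phi^2e^{2f})<0$). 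In short: your computation of $\phi^{(3)}(t_0)$ and your identification of the threshold $C^2>\tfrac{25}{2}$ are both right, but the decisive inequality $\phi'>\sqrt2\,\phi e^f$ must be propagated from $t=0$ along the ODE, not extracted from the conservation law and the estimates at $t_0$ alone.
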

\begin{proof}
We know that $\phi''(0)=0$ and that it is negative in some right neighborhood of the origin.
Suppose there exists $t_o\in[0,B)$ where $\phi''(t_o)=0$ and $\phi''(t)<0$ on $(0,t_o)$.
Then $\phi^{(3)}(t_o) \geq 0$. Using the equations \eqref{GRSeq3}, \eqref{linkeqs}, and the fact that $\phi''(t_o)=0$, we have at $t=t_o$
\[
\begin{split}
\phi^{(3)}(t_o) 	&= -\frac {1-\phi'^2}{\phi^2}\phi' + \phi'f'' - 2f'e^{2f}\phi -e^{2f}\phi' \\
			&= -\frac {1-\phi'^2}{\phi^2}\phi' + \phi' e^{2f} - 2f'e^{2f}\phi -e^{2f}\phi' \\
			&= \frac{1}{\phi}\left(\phi'f'-\phi e^{2f}\right)\phi' - 2f'e^{2f}\phi  \\
			&= - \phi' e^{2f} + \frac{f'}{\phi}\left(\phi'^2 - 2 \phi^2e^{2f}\right).
\end{split}
\]
{

We will show that the last expression in the RHS is negative, obtaining a contradiction.
First, we consider the auxiliary function $\psi\coloneqq \phi' - \sqrt 2 \phi e^{f}$, which is positive at $t=0$. We claim that it is positive on
the interval $[0,t_o)$.
Indeed, suppose there exists $t_1\in [0,t_o)$ where $\psi(t_1)=0$ and $\psi(t) >0$ on $[0,t_1)$.
Then $\psi'(t_1)\leq 0$. On the other hand we have at $t=t_1$
\begin{eqnarray*}\psi'(t_1)&=&  \phi'' - \sqrt 2 \phi'e^{f}-\sqrt 2\phi f'e^f \\
&=& \frac{1-\phi'^2}{\phi} + \phi'f' - e^{2f}\phi -\sqrt 2 \phi'e^{f}-\sqrt 2\phi f'e^f\\
&=& \frac{1-2\phi^2e^{2f}}{\phi} + \sqrt 2\phi e^f f' - e^{2f}\phi - 2 \phi e^{2f}-\sqrt 2\phi f'e^f\\
&=& \frac{1-2\phi^2e^{2f}}{\phi}   - 3 \phi e^{2f}\\
&=& \frac 1{\phi}\left(1 - 5\phi^2e^{2f}\right).
\end{eqnarray*}
If $q< -\frac 56 -\frac{25}{12}$, then by Proposition \eqref{PropEst} we have that
$5\phi^2e^{2f} \leq 5(2|6q+5|)^{-1/2} < 1$, and therefore $\psi'(t_1) >0$, a contradiction.

Now, since $\psi>0$ on $[0,t_o)\subseteq [0,B)$ and $\phi'>0$ on $[0,B)$, we have $\psi\left(\phi' + \sqrt 2 \phi e^{f}\right) = \phi'^2-2\phi^2e^{2f}>0$
on $[0,t_o)$ and therefore $\frac{f'}{\phi}[\phi'^2 - 2 \phi^2e^{2f}]\leq0$ on $[0,t_o]$, as $f$ is decreasing. Thus
\[
\phi^{(3)}(t_o) = - \phi' e^{2f} + \frac{f'}{\phi}\left(\phi'^2 - 2 \phi^2e^{2f}\right) < 0,
\]
which contradicts $\phi^{(3)}(t_o) \geq 0$. The thesis then follows.

}
\end{proof}

We are now ready to prove the long time existence, which follows from Proposition \ref{lte} and the next result.
\begin{proposition}\label{phicresce}
The function $\phi$ has positive derivative on $[0,L)$ if $q< -\frac{35}{12}$.
\end{proposition}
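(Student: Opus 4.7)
The plan is a proof by contradiction that closes by combining the concavity conclusion of Proposition~\ref{conc} with the sharp estimate of Proposition~\ref{PropEst}. Since $\phi'(0)=1$, there is a maximal interval $[0,B)\subseteq[0,L)$ on which $\phi'>0$; I want to show $B=L$. Assume to the contrary that $B<L$, so by continuity $\phi'(B)=0$ while $\phi'>0$ on $[0,B)$.

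First I would observe that $q<-\tfrac{35}{12}$ certainly implies $6q+5<0$, so Propositions~\ref{PropfNeg}, \ref{PropfDecr}, \ref{phi'minore}, and crucially \ref{conc} all apply on the interval $[0,B)$. Proposition~\ref{conc} gives $\phi''\le 0$ on $[0,B)$, and by continuity of $\phi''$ the same inequality extends to $\phi''(B)\le 0$.

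Next, since $\phi''\le 0$ on $[0,B)$, Proposition~\ref{PropEst} applies with $A=B$ and yields the uniform bound
\[
\phi(t)\,e^{f(t)}\le (2|6q+5|)^{-1/4},\qquad t\in[0,B).
\]
The assumption $q<-\tfrac{35}{12}$ gives $|6q+5|>\tfrac{25}{2}$, hence $2|6q+5|>25$ and
\[
\phi(t)\,e^{f(t)} < 25^{-1/4} = \frac{1}{\sqrt{5}} < 1,\qquad t\in[0,B).
\]
By continuity this passes to the endpoint, so $\phi(B)^2\,e^{2f(B)}<\tfrac{1}{5}<1$.

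Finally I would evaluate the first equation of \eqref{GRSeq3} at $t=B$. Using $\phi'(B)=0$, the terms $\frac{(\phi')^2}{\phi}$ and $\phi'f'$ vanish, leaving
\[
\phi''(B) = \frac{1}{\phi(B)} - \phi(B)\,e^{2f(B)} = \frac{1-\phi(B)^2 e^{2f(B)}}{\phi(B)} > 0,
\]
which contradicts $\phi''(B)\le 0$. Hence $B=L$, i.e., $\phi'>0$ on all of $[0,L)$.

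The main obstacle, and the reason the hypothesis $q<-\tfrac{35}{12}$ is exactly the right one, is the interplay between Propositions~\ref{conc} and~\ref{PropEst}: we need the concavity of $\phi$ in order to invoke the estimate on $\phi e^f$, and we need that estimate to be strictly less than $1$ at the would-be critical point so that the first equation forces $\phi''$ to be positive there. Both requirements are secured precisely when $q<-\tfrac{35}{12}$, since the same threshold produces concavity and a bound of the form $(2|6q+5|)^{-1/4}<1$.
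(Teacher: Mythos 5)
Your proposal is correct and follows essentially the same argument as the paper: consider the first zero $t_o$ of $\phi'$, note $\phi''(t_o)\le 0$, invoke Proposition \ref{conc} for concavity on $[0,t_o)$ so that Proposition \ref{PropEst} gives $\phi^2e^{2f}\le(2|6q+5|)^{-1/2}<1$, and derive the contradiction $\phi''(t_o)=\frac{1}{\phi}-\phi e^{2f}>0$ from the first equation of \eqref{GRSeq3}. The only difference is presentational (you pass the estimate to the endpoint by continuity explicitly), so nothing further is needed.
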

\begin{proof} We consider the first point $t_o\in[0,L)$ where $\phi'(t_o) =0$. Then $\phi''(t_o)\leq 0$. Moreover, using the first equation in \eqref{GRSeq3}, 
we have at $t=t_o$
\[
\phi''(t_o)= \frac{1}{\phi} - e^{2f}\phi.
\]
By Proposition \ref{conc} $\phi$ is concave on $[0,t_o)$. Then, by Proposition \ref{PropEst}
we have that $\phi^2e^{2f}\leq (2|6q+5|)^{-1/2}<1$ whence $\phi''(t_o)>0$, a contradiction.
\end{proof}

Summing up, we have proved the existence of a one-parameter family of complete steady solitons given by the pairs $(\phi,f)$
solving the system \eqref{GRSeq3} with $q=\phi^{(3)}(0) < -\frac{35}{12}$.
The solitons $(g_\ell,H_\ell)$ in Theorem \ref{IntroCOMP} are then obtained as the solitons corresponding to solutions $(\phi_\ell,f_\ell)$
to the system \eqref{GRSeq3} with $\phi_\ell^{(3)}(0) = q_\ell = -\frac{35}{12}-e^{-\ell}$ and $X_\ell=\nabla f_\ell$.
The fact that these solitons are positively curved follows from propositions \ref{phi'minore},~\ref{conc},~\ref{phicresce},
together with the fact that the radial sectional curvature $K_{\mathrm{rad}}(t)$ and the sectional curvature $K_{\mathrm{tan}}(t)$ of the orbit
$\SO(3)\cdot\gamma_t$ are given by
\[
K_{\mathrm{rad}}(t)=-\frac{\phi''}{\phi},\qquad K_{\mathrm{tan}}(t)=\frac{1-\phi'^2}{\phi^2},
\]
respectively (see \cite[Remark 2]{Bryant}).
These identities also provide the following geometric interpretation of the parameter $q$,
concluding the proof of \ref{COMPiii}) in Theorem \ref{IntroCOMP}.
\begin{proposition}\label{Propqgeom}
The parameter $q=\phi^{(3)}(0) =  \tfrac12\left( f''(0)-1\right)$ is related to the radial sectional curvature and to the sectional curvature
of the orbit $\SO(3)\cdot\gamma_t$ as follows
\[
q = -\lim_{t\to0^+}K_{\mathrm{rad}}(t) = -\lim_{t\to0^+}K_{\mathrm{tan}}(t).
\]
\end{proposition}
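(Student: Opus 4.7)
The plan is to reduce the statement to a direct Taylor expansion computation at $t=0$, using the extendability conditions on $\phi$ together with the formulas for the radial and tangential sectional curvatures already recorded in the excerpt. Since $\phi$ extends to an odd smooth function on $\R$ with $\phi(0)=0$, $\phi'(0)=1$, $\phi''(0)=0$, and $\phi^{(3)}(0)=q$, its Taylor expansion at the origin reads
\[
\phi(t) = t + \tfrac{q}{6}\,t^3 + O(t^5),
\]
so that $\phi'(t) = 1 + \tfrac{q}{2}\,t^2 + O(t^4)$ and $\phi''(t) = q\,t + O(t^3)$. This is the only structural input needed; the relation $q = \tfrac12(f''(0)-1)$ has already been proved as equation \eqref{der}, so what remains is to evaluate the two curvature limits.

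First I would substitute the expansion into $K_{\mathrm{rad}}(t)=-\phi''/\phi$, obtaining
\[
K_{\mathrm{rad}}(t) = -\frac{q\,t + O(t^3)}{t + O(t^3)} \longrightarrow -q \quad \text{as } t\to 0^+.
\]
Then I would compute $1-\phi'(t)^2 = -q\,t^2 + O(t^4)$ and $\phi(t)^2 = t^2 + O(t^4)$, so that
\[
K_{\mathrm{tan}}(t) = \frac{-q\,t^2 + O(t^4)}{t^2 + O(t^4)} \longrightarrow -q \quad \text{as } t\to 0^+.
\]
Combining the two limits yields $q = -\lim_{t\to 0^+}K_{\mathrm{rad}}(t) = -\lim_{t\to 0^+}K_{\mathrm{tan}}(t)$, which is the desired identity.

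There is no real obstacle here: the whole content of the proposition is that both curvatures admit a common limit at the apex, and this is already forced by the smoothness of the soliton at the origin (equivalently, by the behavior of the warping function to order $t^3$). If one wanted a more coordinate-free justification, the equality of the two limits could alternatively be read off from the fact that $g$ extends smoothly to $\R^3$ at the origin, which compels the sectional curvatures of all $2$-planes at the apex to coincide with a single value; identifying that value with $-q$ then reduces again to the Taylor expansion above.
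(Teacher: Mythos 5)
Your proof is correct and follows essentially the same route as the paper: both arguments evaluate the indeterminate limits of $\phi''/\phi$ and $(1-\phi'^2)/\phi^2$ at the origin using the known jet of $\phi$ there ($\phi(0)=0$, $\phi'(0)=1$, $\phi''(0)=0$, $\phi^{(3)}(0)=q$), the only cosmetic difference being that you extract the leading coefficients via the Taylor expansion while the paper applies de l'H\^opital's rule.
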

\begin{proof}
Using de l'H\^opital rule, we obtain
\[
\begin{split}
-\lim_{t\to0^+}K_{\mathrm{rad}}(t)	&= \lim_{t\to0^+}\frac{\phi''}{\phi} = \lim_{t\to0^+}\frac{\phi^{(3)}}{\phi'} = \phi^{(3)}(0)=q,\\
-\lim_{t\to0^+}K_{\mathrm{tan}}(t)	&= \lim_{t\to0^+}\frac{\phi'^2-1}{\phi^2} =  \lim_{t\to0^+}\frac{\phi''}{\phi} = q.
\end{split}
\]
\end{proof}

\begin{remark}
We emphasize that both the radial and the tangential sectional curvature depend on two derivatives of the metric, and thus of $\phi$.
These quantities are related to $f''(t) = \tfrac12\mathcal{L}_X(\xi,\xi)$ through the soliton equation.
This links, in particular, the limit of the sectional curvatures at the origin with $q = \phi^{(3)}(0) = \tfrac12\left( f''(0)-1\right)$.
\end{remark}

We now prove \ref{COMPiv}) in Theorem \ref{IntroCOMP}.
\begin{proposition}
The metrics $\{g_\ell\}_{\ell\in\mathbb R}$ are mutually non-isometric.
\end{proposition}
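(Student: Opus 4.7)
The plan is to exhibit an isometry invariant of $g_\ell$ that depends non-trivially on the parameter $q_\ell$. Since $q_\ell = -\tfrac{35}{12} - e^{-\ell}$ is strictly monotone in $\ell$, and hence injective, it suffices to prove that any isometry $g_\ell \cong g_{\ell'}$ forces $q_\ell = q_{\ell'}$. The sharp asymptotics established in Theorem \ref{IntroCOMP} make the asymptotic volume growth of geodesic balls a natural candidate for such an invariant.

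First, I would compute the volume of a large geodesic ball centered at the origin. The Riemannian volume form of $g_\ell = dt^2 + \phi_\ell(t)^2 \, d\sigma^2$ on $\R^3\smallsetminus\{0\} \simeq \R^+ \times S^2$ is $\phi_\ell(t)^2\, dt \wedge d\sigma$, where $d\sigma$ is the round area form on $S^2$ of total area $4\pi$. Since the normal geodesic $\gamma_t$ minimizes the distance between $\SO(3)$-orbits (as already exploited in the proof of Proposition \ref{lte}), the geodesic ball of radius $R$ around the origin is $[0,R) \times S^2$, and therefore
\[
\vol_{g_\ell}(B_R) = 4\pi \int_0^R \phi_\ell(s)^2\, ds.
\]
Using $\phi_\ell(s) \sim \sqrt{2s/|6q_\ell+5|}$ from part i) of Theorem \ref{IntroCOMP}, this yields $\vol_{g_\ell}(B_R) \sim \tfrac{4\pi}{|6q_\ell+5|}\, R^2$ as $R \to +\infty$.

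Next, I would observe that the asymptotic volume ratio $V(g) \coloneqq \lim_{R\to+\infty} \vol_g(B_R(p))/R^2$ is independent of the base point $p$, thanks to the triangle-inequality sandwich $B_{R-d}(p) \subseteq B_R(o) \subseteq B_{R+d}(p)$ with $d = d_g(o,p)$, and it is therefore an isometry invariant of $g$. If $g_\ell$ and $g_{\ell'}$ were isometric, then $|6q_\ell + 5| = |6q_{\ell'}+5|$. Since $q_\ell < -\tfrac{35}{12} < -\tfrac{5}{6}$ forces $6q_\ell + 5 < 0$ (and similarly for $q_{\ell'}$), this yields $q_\ell = q_{\ell'}$, and hence $\ell = \ell'$ by the injectivity of $\ell \mapsto q_\ell$.

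The main ingredient is the sharp asymptotic in part i) of Theorem \ref{IntroCOMP}, which has already been established through the delicate qualitative analysis of Section \ref{SecComplete}; the remainder is the well-known fact that the asymptotic volume ratio is a metric invariant. I therefore do not anticipate any serious obstacle beyond carefully invoking these ingredients. An alternative route would use the scalar curvature $R_{g_\ell}(0) = -6q_\ell$ at the origin, but this would require an intrinsic characterization of the origin (e.g.\ as the unique point where the sectional curvature is isotropic), which is less immediate than the volume-growth argument above.
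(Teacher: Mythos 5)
Your proof is correct, but it takes a genuinely different route from the paper. The paper first pins down the identity component of the isometry group: it shows that $\mathrm{Iso}^o(\R^3,g_\ell)=\SO(3)$ (a larger group would act transitively, and a homogeneous positively curved metric would force compactness by Bonnet--Myers), deduces that any isometry $g_\ell\to g_{\ell'}$ must carry $\SO(3)$-orbits to $\SO(3)$-orbits and hence, after composing with a rotation, the normal geodesic to the normal geodesic, and then reads off $q$ as a pointwise curvature invariant at the origin via Proposition \ref{Propqgeom}, namely $q=-\lim_{t\to 0}K_{\mathrm{tan}}(t)=-\lim_{t\to 0}K_{\mathrm{rad}}(t)$. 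You instead use the asymptotic volume ratio, a global invariant: this bypasses the isometry-group analysis entirely (the base-point independence via the ball sandwich is elementary and standard), at the cost of invoking the sharp large-time asymptotics of $\phi_\ell$ from part i) of Theorem \ref{IntroCOMP}. Note that in the paper those asymptotics are established \emph{after} the non-isometry proposition, so you would need to reorder the exposition; there is no circularity, since the asymptotic analysis nowhere uses non-isometry. Two further remarks. First, your argument is insensitive to the exact constant in the asymptotic (there is in fact a discrepancy in the paper between the statement $\phi_\ell\sim\sqrt{2t/|6q_\ell+5|}$ and the limit $\lim_{t\to+\infty}\phi^2/t=2/\sqrt{|6q_\ell+5|}$ actually derived in the proof); either version gives a quantity that is injective in $|6q_\ell+5|$, which is all you need. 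Second, your ``alternative route'' via the curvature at the origin is essentially the paper's argument, and the intrinsic characterization of the origin you were worried about is exactly what the isometry-group step supplies: the origin is the unique singular orbit. What the paper's approach buys in exchange for that extra work is the determination of the full isometry group and the geometric meaning of the parameter $q$, both of independent interest; what yours buys is brevity and robustness.
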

\begin{proof}
First of all, we observe that the connected component of the isometry group of each metric $g_\ell$ is isomorphic to $\SO(3)$.
Indeed, let $\G_\ell\coloneqq \mathrm{Iso}^o(\R^3,g_\ell)$, which contains $\SO(3)$.
It is clear that $\G_\ell$ and its subgroup $\SO(3)$ share the same orbits, as otherwise $\G_\ell$ would have an open orbit
and therefore it would act transitively by the completeness of $g_\ell$.
In that case, the Ricci tensor of the positively curved homogeneous metric $g_\ell$ would have a positive lower bound,
implying the compactness of the manifold, a contradiction.
Let now $\varphi:(\R^3,g_\ell)\to (\R^3,g_{\ell'})$ be an isometry.
Then, $\varphi$ maps $\G_\ell$-orbits onto $\G_{\ell'}$-orbits and therefore, up to composing $\varphi$ with an element of $\G_\ell$
and up to a translation $t\mapsto t+c$, we can suppose that $\varphi(\gamma_t)=\gamma_{\pm t}$.
As $\varphi$ preserves the sectional curvatures, we see that $q_\ell=q_{\ell'}$ by Proposition \ref{Propqgeom}.
\end{proof}

We have already shown that $\phi$ is concave and satisfies $0<\phi'<1$,
and that $f$ is negative and strictly decreasing whenever $q<-\frac{35}{12}$.
We now complete the proof of \ref{COMPi}) and \ref{COMPii}) in Theorem \ref{IntroCOMP}
by showing the asymptotic behavior of these functions.
We start with the following lemma.
\begin{lemma}\label{f} There exist $T>0$ and constants $a,M\in\R,$ with $a<0$, such that
\[
f(t) \leq at + M, \qquad t\geq T.
\]
In particular, $\lim_{t\to+\infty} e^f = \lim_{t\to+\infty}\phi e^f = 0$.\end{lemma}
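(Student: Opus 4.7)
The plan is to rewrite the conservation law \eqref{int1} as a Riccati-type ODE for the positive function $v\coloneqq -f'$ (positive on $(0,+\infty)$ by Proposition \ref{PropfDecr}), and to show that $v$ is eventually bounded below by a positive constant; once I have $v(t)\geq \epsilon$ on some $[T,+\infty)$, integrating $f'\leq -\epsilon$ immediately yields $f(t)\leq at+M$ with $a\coloneqq -\epsilon<0$ and $M\coloneqq f(T)+\epsilon T$. Setting $C\coloneqq \sqrt{|6q+5|}>0$ and substituting $f'=-v$, $f''=-v'$ into \eqref{int1} gives
\[
v'(t) \;=\; C^2 - v(t)^2 + 2e^{2f(t)} - 2\,\frac{\phi'(t)}{\phi(t)}\,v(t).
\]

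To produce the required lower bound on $v$, I will use Propositions \ref{phi'minore} and \ref{phicresce}: $\phi'\leq 1$ and $\phi$ is positive and increasing, hence $\phi'/\phi\leq K\coloneqq 1/\phi(1)$ on $[1,+\infty)$. Choosing $\epsilon>0$ small enough that $\epsilon^2+2K\epsilon\leq C^2/2$, and using $2e^{2f}\geq 0$, I obtain the key implication
\[
t\geq 1 \ \text{ and }\ v(t)\leq \epsilon \quad\Longrightarrow\quad v'(t)\geq C^2-\epsilon^2-2K\epsilon \geq \tfrac12 C^2 > 0.
\]
Since $v(0)=0$ and $v$ grows at rate at least $C^2/2$ whenever it lies in $[0,\epsilon]$ for $t\geq 1$, it must leave this interval in finite time; at any later $t$ with $v(t)=\epsilon$ the same inequality gives $v'(t)>0$, which by continuity rules out any re-entry of $[0,\epsilon]$. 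Hence there exists $T$ with $v(t)>\epsilon$ for all $t\geq T$, and the desired linear upper bound on $f$ follows by integration. The remaining conclusions are immediate: $e^{f(t)}\leq e^M e^{at}\to 0$, and, using $\phi(0)=0$ together with $\phi'\leq 1$ to infer $\phi(t)\leq t$, also $\phi(t)e^{f(t)}\leq t\,e^{M+at}\to 0$.

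The main obstacle will be the no-return step: it requires the negative drift $-2(\phi'/\phi)v$ to be dominated at the level set $\{v=\epsilon\}$ using only the boundedness of $\phi'/\phi$ available from Propositions \ref{phi'minore}--\ref{phicresce}, without invoking the stronger decay $\phi'/\phi\to 0$ (which in turn would require knowing $\phi\to\infty$, a property not yet established at this stage of the paper). This is precisely what forces $\epsilon$ to be chosen small relative to both $C$ and the explicit constant $K=1/\phi(1)$, rather than as a universal quantity.
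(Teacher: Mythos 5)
Your argument is correct and rests on the same mechanism as the paper's proof: the conservation law \eqref{int1} is rewritten as a first-order differential relation for $f'$, and a first-touching-point (barrier) argument shows that $f'$ cannot return above a fixed negative level, after which integration and $\phi\leq t$ give the stated bounds. The only difference is the choice of barrier --- the paper uses the level $a=f'(T)$ with $T$ close to $0$, exploiting the limit of $-2f'/\phi+(f')^2$ at the origin and the inequality $2q+3<0$, whereas you use a small constant level $\epsilon$ for $v=-f'$ on $[1,+\infty)$ calibrated by $C=\sqrt{|6q+5|}$ and $1/\phi(1)$; both are valid.
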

\begin{proof} We consider the function $\psi(t)\coloneqq -2\frac{f'(t)}{\phi(t)} + (f'(t))^2$ and observe that $\lim_{t\to 0^+}\psi(t)= -2(2q+1)$.
We choose a positive $\epsilon$ with $0<\epsilon<-2q-3$ (note that $q<-\frac 32$ and such a positive $\epsilon$ exists) and we fix a point $T>0$
so that $\psi(T)<-2(2q+1)+\epsilon$ where $0<\epsilon<-2q-3$.
We now put $a\coloneqq f'(T)$. As $f''(0)<0$ we can choose $T$ small enough so that $f''(T)<0$,
hence $f'(t)<f'(T)$ in a small right neighborhood of $T$.
We claim that $f'(t)< f'(T)$ for all $t>T$. Indeed suppose we have a point $T_1>T$ with $f'(T_1)=f'(T)$ while $f'(t)<f'(T)$ for all $t\in (T,T_1)$.
Then $f''(T_1)\geq 0$. On the other hand, by \eqref{int1} we have
\[
\begin{split}
f''(T_1)	&= 6q+5-2e^{2f(T_1)} - 2f'(T_1)\frac{\phi'(T_1)}{\phi(T_1)} + (f'(T_1))^2 \\
		&\leq 6q+5 - 2f'(T)\frac{1}{\phi(T)} + (f'(T))^2 \\
		&= 6q+5 + \psi(T)< 6q+5 -2(2q+1) +\epsilon < 2q+3-2q-3 = 0.
\end{split}
\]
where we have used the fact that $0<\phi'<1$ and $f'$ is negative.
This contradiction shows that $f'(t)<f'(T) =a<0$ for all $t>T$. Then $f(t)<at +M$ for some constant $M$ and all $t>T$.
The last claim follows from the previous estimate and the fact that $\phi\leq t$, for all $t\in\mathbb R^+$.
\end{proof}

\begin{proposition}
The functions $\phi$ and $f$ have the following asymptotic behavior for $t\to+\infty$
\[
\phi \sim \sqrt{\frac{2t}{|6q+5|}},\qquad  f \sim -\sqrt{|6q+5|}\,t.
\]
\end{proposition}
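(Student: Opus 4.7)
The idea is to combine the conservation law \eqref{int2} with a convenient second-order ODE for $\psi\coloneqq \phi^{2}$. Differentiating and using the first equation of \eqref{GRSeq3}, one computes directly
\[
\psi''-\psi'\,f' \;=\; 2-2(\phi e^{f})^{2},
\]
and by Lemma \ref{f} the right-hand side converges to $2$ as $t\to +\infty$ (in fact exponentially fast).

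First I would show that $\phi\to +\infty$. If $\phi$ were bounded, concavity and the monotonicity of $\phi'$ would force $\psi'(t)=2\phi(t)\phi'(t)\to 0$, whereas the displayed identity would give $\psi''(t)\to 2$; this is incompatible with $\psi$ staying bounded. Consequently $\phi\to +\infty$, so $1/\phi$ and $\phi'/\phi$ both tend to $0$. Passing to the limit in \eqref{int2} then yields $(f')^{2}\to |6q+5|=:C^{2}$, and Proposition \ref{PropfDecr} forces $f'(t)\to -C$. Integration gives $f(t)\sim -C\,t$, which is the second stated asymptotic.

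For the asymptotic of $\phi$, I would rewrite the $\psi$-equation in the asymptotically linear form
\[
\psi''+C\,\psi' \;=\; 2\;+\;(C+f')\,\psi'\;-\;2(\phi e^{f})^{2}.
\]
The last term decays exponentially by Lemma \ref{f}. Provided $\psi'$ stays bounded (an a priori estimate that follows from $\phi' \to 0$, the concavity of $\phi$, and the identity $\psi'=2\phi\phi'$), the middle correction is $o(1)$ as well. A variation-of-constants argument applied to this perturbation of the linear ODE $\psi''+C\psi'=2$ then delivers $\psi'(t)\to 2/C$, whence $\psi(t)\sim (2/C)\,t$ and $\phi(t)$ has the claimed square-root growth.

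The main obstacle is this final step: one must first establish a uniform bound on $\psi'$ so that $(C+f')\psi'$ is a genuine $o(1)$ perturbation, and then carefully extract the limit $\psi'(t)\to 2/C$ from the asymptotically linear ODE; the usual variation-of-constants integrals must be shown to converge using the exponential decay of $2(\phi e^{f})^{2}$ and the precise rate at which $C+f'\to 0$. The other pieces follow directly from the conservation laws \eqref{int1}--\eqref{int2} and the monotonicity/concavity results of Propositions \ref{PropfNeg}--\ref{phicresce}.
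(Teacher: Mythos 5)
Your strategy is essentially the one the paper follows: your identity $\psi''-f'\psi'=2-2(\phi e^f)^2$ for $\psi=\phi^2$ is exactly twice (the derivative form of) the paper's first-order equation $(\phi\phi')'=1+f'\phi\phi'-\phi^2e^{2f}$, and the overall scheme --- show $\phi\to+\infty$, extract $f'\to -C$ from \eqref{int2}, then treat the equation for $\phi^2$ as an asymptotically linear ODE with exponentially decaying forcing --- is the same. But two steps are justified incorrectly. In the proof that $\phi\to+\infty$ you pass from $\psi'\to0$ to $\psi''\to2$, which requires $\psi'f'\to0$ and hence a bound on $f'$; since $\psi'\geq0$ and $f'<0$, the identity only gives $\psi''\leq 2+o(1)$ a priori, and if $f'$ were unbounded the product $\psi'f'$ could stay away from zero. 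The same issue recurs when you ``pass to the limit in \eqref{int2}'': the term $4\frac{\phi'}{\phi}f'$ vanishes only once $f'$ is known to be bounded. Both gaps are fillable from \eqref{int2} itself (the two terms involving $f'$ are nonpositive there, so unbounded $f'$ would drive the left-hand side to $-\infty$), or, as the paper does, by solving \eqref{int2} for $f'$ as an explicit radical; but as written the steps are incomplete.

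More seriously, your proposed a priori bound on $\psi'=2\phi\phi'$ ``from $\phi'\to0$, concavity, and $\psi'=2\phi\phi'$'' does not work: at that stage you only know that $\phi'$ decreases to some limit $\ell\geq0$, and even if $\ell=0$ the product $\phi\phi'$ need not be bounded, since $\phi\to+\infty$. The bound must come from the ODE itself: for $t$ large one has $f'\leq -C/2$ and $\phi^2e^{2f}\geq0$, so $(\phi\phi')'\leq 1-\tfrac{C}{2}\,\phi\phi'$, which bounds $\phi\phi'$; the paper runs exactly this two-sided differential-inequality comparison and obtains $\phi\phi'\to 1/C$ directly, which makes your variation-of-constants step unnecessary (though it would also work once the bound is in hand). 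Finally, note that both your computation and the paper's proof yield $(\phi^2)'\to 2/C$ with $C=\sqrt{|6q+5|}$, i.e.\ $\phi^2\sim 2t/\sqrt{|6q+5|}$; this agrees internally but not with the constant displayed in the statement, which is worth flagging rather than absorbing into ``the claimed square-root growth.''
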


\begin{proof}
We claim that $\lim_{t\to+\infty} \phi = +\infty$.
We know that $\phi$ is increasing, so that the limit $\lim_{t\to+\infty}\phi(t)=M\in \mathbb R^+\cup\{+\infty\}$ exists.
Suppose $M<+\infty$. As $\phi$ is concave, the limit $\lim_{t\to +\infty}\phi'(t)$ exists finite and it must be $0$, otherwise $\phi$ would be unbounded.
If we look at the conservation law \eqref{int2}, we see that
\beq\label{f'}
f'= 2\frac{\phi'}{\phi}\pm \sqrt{4\frac{\phi'^2}{\phi^2}+e^{2f}+2\frac{1-\phi'^2}{\phi^2} -6q-5}.
\eeq
As $\lim_{t\to+\infty}f=-\infty$ by Lemma \ref{f}, we see that by \eqref{f'} the limit $\lim_{t\to+\infty}f'(t)$ exists and is finite.
This implies that $\lim_{t\to+\infty}\phi'f' = 0$.
Using now the equation for $\phi''$ together with Lemma \ref{f},  we see that
$\lim_{t\to+\infty}\phi'' = \frac 1{M}>0$, a contradiction. This proves that $\lim_{t\to+\infty}\phi=+\infty$.
By \eqref{f'}, we see that $\lim_{t\to+\infty} f'= -\sqrt{-6q-5}$.

We now put $\psi\coloneqq\phi\phi'$. From the system \eqref{GRSeq3}, we obtain
\[
\psi'=1+f'\psi - \phi^2 e^{2f}.
\]
As $\lim_{t\to+\infty}\phi e^{f}= 0$ by Lemma \ref{f},
and $\lim_{t\to+\infty}f'= -\a$, where $\a\coloneqq\sqrt{-6q-5}$, for every $\epsilon >0$ there exists $M>0$ so that for all $t\geq M$
\[
1-(\a+\epsilon)\psi -\epsilon \leq \psi'\leq 1 + (\epsilon-\a)\psi.
\]
By integration, there exist $c_1,c_2,c_3,c_4\in\mathbb R$ so that for all $t\geq M$
\[
2\frac{1-\epsilon}{\a+\epsilon} t + c_1e^{(\a+\epsilon)(M-t)}+c_2\leq \phi^2\leq \frac{2}{\a-\epsilon}t + c_3e^{(\a-\epsilon)(M-t)}+c_4.
\]
Hence
\[
2\frac{1-\epsilon}{\a+\epsilon} \leq \liminf_{t\to +\infty}\frac {\phi^2}{t} \leq \limsup_{t\to\infty}\frac{\phi^2}{t}\leq \frac {2}{\a-\epsilon},
\]
so that letting $\epsilon\to 0$, we see that $\lim_{t\to+\infty}\frac{\phi^2}t=\frac 2\a$.
\end{proof}

\smallskip

We are left with assertion \ref{COMPv}) in Theorem \ref{IntroCOMP}.
In order to prove it, we first observe that the expression
$H= \frac{h}{t^2}\vol_o = \sqrt 2 \frac{\phi^2 e^f}{t^2} \vol_o$ follows from \eqref{h} with $k=\sqrt 2$.
As $\frac{\phi^2}{t^2}\leq 1$, by Lemma \ref{f} we see that there exist constants $a<0$, $K>0$ such that
\[
\sqrt 2 \frac{\phi^2 e^f}{t^2}\leq K e^{at},
\]
for sufficiently large $t$.
From this, assertion \ref{COMPv}) follows, and the proof of Theorem \ref{IntroCOMP} is complete.

\bigskip

\noindent
{\bf Acknowledgements.}
The authors were supported by GNSAGA of INdAM
and by the project PRIN 2022AP8HZ9 "Real and Complex Manifolds: Geometry and Holomorphic Dynamics".
The authors would like to thank Jeffrey Streets, Mario Garcia-Fern\'andez and Ramiro Lafuente for their valuable remarks,
as well as Reto Buzano and Susanna Terracini for interesting conversations.
They also thank the referee for very useful comments, which helped to improve the article.

\end{document}